\title{The symmetries of the ${}_2\phi_1$}
\author{Fokko van de Bult}
\date{\today}
\newcommand{\mfi}[4]{\;{}_2\phi_1(#1,#2;#3;q,#4)}
\newtheorem{theorem}{Theorem}[section]
\newtheorem{defi}[theorem]{Definition}
\newtheorem{prop}[theorem]{Proposition}
\newtheorem{lemma}[theorem]{Lemma}
\newtheorem{cor}[theorem]{Corollary}
\newenvironment{acknowledgements}{\textbf{Acknowledgements:}}{}
\begin{document}

\begin{abstract}
We show that the only symmetries of the ${}_2\phi_1$ within a large class of
possible transformations are Heine's transformations. 
The class of transformations considered consists of equation of the form
${}_2\phi_1(a,b;c;q,z)= f(a,b,c,z)\; {}_2\phi_1(L(a,b,c,q,z))$, where $f$ is a 
$q$-hypergeometric term and $L$ a linear operator on the logarithms of the 
parameters. We moreover prove some results on $q$-difference equations satisfied
by ${}_2\phi_1$, which are used to prove the main result.
\end{abstract}

\maketitle

Basic hypergeometric series and their properties have been studied a long time, as they have many applications.
One of the interesting properties of such series is the fact that they satisfy certain transformation formulas.
We are interested in transformation formulas of one series to a similar series with different parameters.
These symmetries of hypergeometric series often arise from symmetries of underlying algebraic structures.

In many important cases transformation formulas are well-known, one of the simplest of which is Heine's transformation
of the ${}_2\phi_1$. The ${}_2\phi_1$ is a basic hypergeometric analogue of the Gau\ss\ hypergeometric function
${}_2F_1$. In this article we use the notation from Gasper and Rahman \cite{GenR}.
We assume $q\in \mathbb{C}^*$ is some complex number satisfying $|q|<1$, which ensures all infinite products converge.
We define $q$-shifted factorials by
\[
(x;q)_k= \frac{(x;q)_\infty}{(xq^k;q)_\infty}, \qquad (x;q)_\infty = \prod_{j=0}^\infty (1-xq^j),
\]
and use the abbreviated notation $(a_1,a_2,\ldots,a_s;q)_k = \prod_{j=1}^s (a_j;q)_k$.
The series ${}_2\phi_1$ is now given by the series
\[
{}_2\phi_1(a,b;c;q,z) = \sum_{k=0}^\infty \frac{(a,b;q)_k}{(q,c;q)_k} z^k.
\]
The ${}_2\phi_1$ converges if and only if $|z|<1$. For arbitrary $|q|<1$, the ${}_2\phi_1$ can be analytically extended to a
meromorphic function for $(a,b,c,z) \in \mathbb{C}^4$, and we will denote this extention simply by ${}_2\phi_1$.

Heine's transformation \cite[(III.1)]{GenR}  gives the equality
\begin{equation}\label{eqHeine}
{}_2\phi_1(a,b;c;q,z) = \frac{(b,az;q)_\infty}{(c,z;q)_\infty} {}_2\phi_1(c/b,z;az;q,b).
\end{equation}
This symmetry together with the permutation symmetry of $a$ and $b$ generates a symmetrygroup 
isomorphic to $S_3\times S_2$ as already noted by Rogers \cite{Rogers}. 
It is commonly assumed that Heine's transformations and its iterates exhaust all the symmetries of the 
${}_2\phi_1$, however this has not yet been proven rigorously.
In this article we prove that there exist no other such symmetries, at least not
within a certain large class of possible transformations (defined below). 
As of now we will use the word transformation for an element of the class of allowed transformation, and 
the word symmetry for an identity relating a ${}_2\phi_1$ with a transformed ${}_2\phi_1$.

The proof of the main theorem was inspired by the ideas of Petkov\v sek, Wilf and Zeilberger \cite{aisb} on evaluating hypergeometric sums.
Essentially they solve the problem of evaluating a hypergeometric sum in two steps, first they find a recurrence relation satisfied by 
this sum, and subsequently they find the solutions to this recurrence relation. Within these solutions it is
then easy to find the right evaluation of the original sum (if it exists). 

The main idea of the proof in this article is similar. We first consider the contiguous relations satisfied by 
the ${}_2\phi_1$. Subsequently we find all transformed ${}_2\phi_1$'s which satisfy these contiguous relations.
From these the symmetries of the ${}_2\phi_1$ are easily determined.


The method used in this article to prove that the known symmetries of the ${}_2\phi_1$ are indeed all the 
symmetries it satisfies can probably also be used to give a complete classification of the symmetries of other 
basic hypergeometric series, such as balanced ${}_3\phi_2$'s and very-well poised ${}_8\phi_7$'s.
An essential part of the proof uses that we can relate different parameters by the known symmetries
(for example Heine's symmetry \eqref{eqHeine} relates the parameter $b$ and $z$), to obtain a full set
of contiguous relations. If there do not exist such symmetries a balancing condition might be sufficient as well. 

The setup of this article is as follows. In the next section we introduce some algebraic constructs which 
allow us to give precise definitions of transformations. In particular it allows us to state the main theorem.
In Section \ref{secdiff} we consider the theory of $q$-difference equations (or contiguous relations) satisfied by the ${}_2\phi_1$.
Finally in Section \ref{secproofmain} we prove the main theorem.

\begin{acknowledgements} 
I would like to thank Jasper Stokman for his helpful comments and proofreading the article.
\end{acknowledgements}

\section{Some algebraic constructs}\label{sec1}
In order to be able to give a precise and concise statement of the main theorem and the results leading 
up to the main theorem it is convenient to introduce some notation.
In this section we define the group of transformations of a ${}_2\phi_1$ and
give an algebraic description of $q$-difference equations.

The relations between the different objects can get somewhat confusing, hence I included Figure \ref{figconst}, which
contains all objects in a single table.

The general form of Heine's symmetry is of multiplication by a $q$-hypergeometric term (see
Definition \ref{defqhyp} below) and applying a linear transformation on the logarithms of the parameters.
Let us first define this group of linear transformations.
\begin{defi}
The group $G$ is given by
\[
G=\{M\in GL_5(\mathbb{Z}) ~|~  M^T e_5=e_5\},
\]
where $e_5$ denotes the fifth unit vector and $M^T$ denotes the 
transpose of $M$.
\end{defi}
Note that the condition $M^T e_5=e_5$ signifies that the bottom row of the matrices $M\in G$ is equal 
to $(0,0,0,0,1)$. Now we give the action of $G$ on functions of 5 variables.
Let $\mathcal{M}$ be the field of meromorphic functions on the variables $(a,b,c,q,z)$.
\begin{lemma}
There exists an action of $G$ on $(\mathbb{C}^*)^5$ (elements of which are denoted by $(a,b,c,z,q)$) given by
\[
\left( 
\begin{tabular}{ccccc}
$l_{11}$ & $l_{12}$ & $l_{13}$ & $l_{14}$ & $l_{15}$ \\
$l_{21}$ & $l_{22}$ & $l_{23}$ & $l_{24}$ & $l_{25}$ \\
$l_{31}$ & $l_{32}$ & $l_{33}$ & $l_{34}$ & $l_{35}$ \\
$l_{41}$ & $l_{42}$ & $l_{43}$ & $l_{44}$ & $l_{45}$ \\
0 & 0 & 0 & 0 & 1 
\end{tabular}
\right)
\left( 
\begin{tabular}{c}
$a$ \\ $b$\\$c$\\$z$ \\ $q$
\end{tabular}
\right)
=
\left( 
\begin{tabular}{c}
$a^{l_{11}}b^{l_{12}}c^{l_{13}}z^{l_{14}}q^{l_{15}}$ \\ 
$a^{l_{21}}b^{l_{22}}c^{l_{23}}z^{l_{24}}q^{l_{25}}$ \\
$a^{l_{31}}b^{l_{32}}c^{l_{33}}z^{l_{34}}q^{l_{35}}$ \\
$a^{l_{41}}b^{l_{42}}c^{l_{43}}z^{l_{44}}q^{l_{45}}$ \\
$q$
\end{tabular}
\right).
\]
This action extends to an action of $G$ on $\mathcal{M}$ by field automorphisms given by 
$L(f)(a,b,c,z,q) = f(L^{-1}(a,b,c,z,q))$.
\end{lemma}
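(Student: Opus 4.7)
The plan is to reduce the lemma to two routine verifications: that the displayed formula defines a left action of $G$ on the torus $(\mathbb{C}^*)^5$, and that the resulting pullback defines a field automorphism of $\mathcal{M}$ satisfying the group action axioms.

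First I would check well-definedness and $q$-invariance of the action on $(\mathbb{C}^*)^5$. Each output coordinate is a monomial in $(a,b,c,z,q)$ with integer exponents, hence lies in $\mathbb{C}^*$ whenever the input does. The constraint $M^T e_5 = e_5$ forces the bottom row of $M$ to be $(0,0,0,0,1)$, so the fifth output coordinate is $a^0 b^0 c^0 z^0 q^1 = q$, and the $q$-coordinate is preserved as required. To check the group action axiom $(M_1 M_2)(v) = M_1(M_2 v)$, I would simply expand: writing $(m_r)_{ij}$ for the entries of $M_r$, the $i$-th coordinate of $M_1(M_2 v)$ equals
\[
\prod_k (M_2 v)_k^{(m_1)_{ik}} = \prod_k \Bigl( \prod_j v_j^{(m_2)_{kj}} \Bigr)^{(m_1)_{ik}} = \prod_j v_j^{\sum_k (m_1)_{ik}(m_2)_{kj}},
\]
and the exponent is precisely the $(i,j)$-entry of $M_1 M_2$. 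Bijectivity of the action of each $M$ then follows by taking $M_2 = M^{-1}$, having first noted that the subset of $GL_5(\mathbb{Z})$ with last row $e_5^T$ is closed under matrix inversion and hence is indeed a subgroup.

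To extend the action to $\mathcal{M}$, I would observe that the substitution $v \mapsto L^{-1} v$ replaces each of $a,b,c,z,q$ by a Laurent monomial in themselves, which is a meromorphic self-map away from the coordinate hyperplanes. Pulling back a meromorphic $f$ therefore gives a meromorphic function $L(f)$. Since substitution of variables distributes over sums and products, and nonzero $f$ has nonzero pullback (because $L^{-1}$ is a bijection on $(\mathbb{C}^*)^5$, so the image meets the complement of the zero set of $f$), the assignment $f \mapsto L(f)$ is a field automorphism. The action axiom on $\mathcal{M}$ then follows from the action axiom on the torus by the standard computation
\[
((L_1 L_2)(f))(v) = f((L_1 L_2)^{-1} v) = f(L_2^{-1} L_1^{-1} v) = (L_2(f))(L_1^{-1} v) = (L_1(L_2(f)))(v).
\]

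The only mildly subtle point, which I would flag as the main potential obstacle, is to ensure that the Laurent monomial substitution really does send $\mathcal{M}$ into itself rather than producing some more exotic object; but because the exponents are integers, no logarithms or branch choices enter, and meromorphicity on the original ambient space is preserved. Everything else is a direct bookkeeping argument with the matrix entries.
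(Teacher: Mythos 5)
Your proposal is correct and follows essentially the same route as the paper, which disposes of the lemma in one line by observing that this is just the standard action of $GL_5(\mathbb{Z})$ applied to the logarithms (equivalently, the exponents) of the variables; your matrix-entry computation of $(M_1M_2)(v)=M_1(M_2v)$ and the pullback argument for $\mathcal{M}$ are exactly the details that one-liner leaves implicit. Your remark that integer exponents avoid any branch-of-logarithm issue is a worthwhile clarification the paper glosses over, but it does not change the argument.
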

\begin{proof}
The proposed action of $G$ on sets of variables $(a,b,c,z,q)$ is just the normal action of 
$GL_5(\mathbb{Z})$ on 5-dimensional vectors applied to the logarithms of 
the variables, thus it is clearly a well-defined action.
\end{proof}
Note that this describtion is very similar to the use of homogeneous coordinates to express affine transformations.
The affine part of the transformations allows multiplication of some variables by powers of $q$. Such a multiplication 
does not occur in Heine's symmetry \eqref{eqHeine}, but it does happen in symmetries of other 
basic hypergeometric series, as for example the symmetries of the very well poised ${}_8\phi_7$.

As an example we see that
\[
\left( 
\begin{tabular}{ccccc}
0 & -1 & 1 & 0 & 0 \\
0 & 0 & 0 & 1 & 0 \\
1 & 0 & 0 & 1 & 0 \\
0 & 1 & 0 & 0 & 0 \\
0 & 0 & 0 & 0 & 1 
\end{tabular}
\right)
{}_2\phi_1 \left( \begin{array}{c} a,b \\ c \end{array} ;q,z \right) = 
{}_2\phi_1 \left( \begin{array}{c} c/b,z \\ az \end{array};q,b \right)
\]
provides exactly the change in arguments of the ${}_2\phi_1$ in Heine's symmetry.

Another important example shows pure $q$-dilations are contained in $G$, for example
\[
 \left( 
\begin{tabular}{ccccc}
1 & 0 & 0 & 0 & -1 \\
0 & 1 & 0 & 0 & 0 \\
0 & 0 & 1 & 0 & 1 \\
0 & 0 & 0 & 1 & -2 \\
0 & 0 & 0 & 0 & 1 
\end{tabular}
\right).
f(a,b,c,q,z) = f(q a,b,q^{-1}c,q,q^{2}z).
\]
This leads us to define the subgroup of $q$-dilations in $G$.
\begin{defi}\label{defABCZ}
We define the group $N= \{L\in G ~|~ L e_j = e_j (j=1,2,3,4)\}$. 
More explicitely $N$ consists of the matrices in $GL_5(\mathbb{Z})$ of the form
\[
M_{k_a,k_b,k_c,k_z} :=
\left( 
\begin{tabular}{ccccc}
1 & 0 & 0 & 0 & $-k_a$ \\
0 & 1 & 0 & 0 & $-k_b$ \\
0 & 0 & 1 & 0 & $-k_c$ \\
0 & 0 & 0 & 1 & $-k_z$ \\
0 & 0 & 0 & 0 & 1 
\end{tabular}
\right).
\]
The group $N$ is generated by the elements $A=M_{1,0,0,0}$, $B=M_{0,1,0,0}$, $C=M_{0,0,1,0}$ and $Z=M_{0,0,0,1}$. 
\end{defi}
Note that $A$ acts as a $q$-dilation in $a$, i.e.\ $Af(a,b,c,q,z)=f(aq,b,c,q,z)$. Similarly
$B$, $C$, respectively $Z$ act as elementary $q$-dilations in the variables $b$, $c$, respectively $z$.
\begin{lemma}
The subgroup $N$ is normal in $G$. Moreover $N$ is isomorphic to $\mathbb{Z}^4$ (and thus abelian).
\end{lemma}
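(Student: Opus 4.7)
The plan is to exploit the block structure forced on elements of $G$ by the condition $M^T e_5 = e_5$. Every $L\in G$ has the form
\[
L = \left(\begin{array}{cc} A & v \\ 0 & 1 \end{array}\right),
\]
where $A$ is a $4\times 4$ integer matrix and $v\in\mathbb{Z}^4$. Since $\det L = \det A = \pm 1$, we have $A\in GL_4(\mathbb{Z})$. Elements of $N$ are precisely those with $A=I$, because the condition $Le_j = e_j$ for $j=1,2,3,4$ says exactly that the first four columns of $L$ are the standard basis vectors, and the bottom row condition from $G$ then forces $L$ to be of the form $M_{k_a,k_b,k_c,k_z}$ with $v=(-k_a,-k_b,-k_c,-k_z)^T$.

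First I would observe that the map $\pi: G \to GL_4(\mathbb{Z})$ sending $L$ to its upper left $4\times 4$ block $A$ is a group homomorphism: given $L' = \left(\begin{smallmatrix} A' & v' \\ 0 & 1 \end{smallmatrix}\right)$, a direct block multiplication yields
\[
LL' = \left(\begin{array}{cc} AA' & Av'+v \\ 0 & 1 \end{array}\right),
\]
so $\pi(LL') = AA' = \pi(L)\pi(L')$. By construction $N = \ker\pi$, hence $N$ is normal in $G$.

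For the second claim, I would exhibit an explicit isomorphism $\varphi:\mathbb{Z}^4 \to N$ by $\varphi(k_a,k_b,k_c,k_z) = M_{k_a,k_b,k_c,k_z}$. Surjectivity and injectivity are immediate from the explicit parametrisation above, and the group law follows from the block computation: with $A=A'=I$ one obtains $M_{k_a,k_b,k_c,k_z}M_{k_a',k_b',k_c',k_z'} = M_{k_a+k_a',k_b+k_b',k_c+k_c',k_z+k_z'}$, so $\varphi$ intertwines addition on $\mathbb{Z}^4$ with multiplication in $N$. In particular $N$ is abelian and isomorphic to $\mathbb{Z}^4$.

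There is really no obstacle here; the only thing to be slightly careful about is to verify that $A$ indeed lies in $GL_4(\mathbb{Z})$ (so that $\pi$ has the claimed codomain), which follows at once from expanding $\det L$ along the last row. Everything else is a routine block-matrix calculation.
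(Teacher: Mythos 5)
Your proof is correct. It takes a mildly different route from the paper's: the paper establishes normality by a direct conjugation computation, observing that the condition $M^Te_5=e_5$ forces every $L\in G$ to preserve the subspace $V=\mathrm{span}(e_1,\dots,e_4)$, so that $LPL^{-1}e_j=LL^{-1}e_j=e_j$ for $P\in N$ and $j=1,\dots,4$; it then simply states the isomorphism $M_{k_a,k_b,k_c,k_z}\mapsto(k_a,k_b,k_c,k_z)$ without further verification. You instead package the same block structure into a homomorphism $\pi:G\to GL_4(\mathbb{Z})$ and identify $N$ as $\ker\pi$, which gets normality for free and, as a bonus, makes the group law on $N$ (hence the isomorphism with $\mathbb{Z}^4$ and commutativity) an explicit consequence of the block multiplication formula. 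The two arguments rest on the identical observation (the bottom-row condition fixes the block form), so neither is deeper than the other; yours is slightly more systematic in that it verifies the homomorphism property of the map to $\mathbb{Z}^4$, which the paper leaves implicit, and your check that $\det A=\pm1$ so that $\pi$ lands in $GL_4(\mathbb{Z})$ is a small point the paper does not need but that your formulation requires and correctly supplies.
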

\begin{proof}
Note that for $L\in G$ we have \[\langle L e_j,e_5 \rangle = \langle e_j, L^T e_5 \rangle = \langle e_j,e_5 \rangle=0\] 
for $j=1,2,3,4$, i.e. $L$ maps the space $V$ spanned by $e_1$, $e_2$, $e_3$ and $e_4$ to itself. 
Thus for any $P\in N$ and $j=1,2,3,4$ we find that $LPL^{-1} e_j = L L^{-1} e_j = e_j$, as $P$ preserves $V$ and
$L^{-1} e_j \in V$. Thus for any element $P\in N$, the conjugate $LPL^{-1}$ of $P$ with an element $L\in G$
is also contained in $N$. Therefore $N$ is a normal subgroup of $G$.

An isomorphism of $N$ to $\mathbb{Z}^4$ is given by $M_{k_a,k_b,k_c,k_z} \mapsto (k_a,k_b,k_c,k_z)$.
\end{proof}

We can now give a concise definition of $q$-hypergeometric terms.
\begin{defi}\label{defqhyp}
Define $\mathcal{R}=\mathbb{C}(a,b,c,q,z) \subset \mathcal{M}$ to be the field of rational function in $a$, $b$, $c$, $q$ and $z$.
The group $H$ of $q$-hypergeometric terms is given by
\[
 H = \{ f\in \mathcal{M}^* ~|~ P(f)/f \in \mathcal{R} \text{ for all } P \in N\}.
\]
Here a star denotes the group of units of a ring, thus $\mathcal{M}^*$ is the group of meromorphic functions, which are not
identically zero. The group action on $H$ is given by multiplication. 
\end{defi}
Typical examples of $q$-hypergeometric terms are products of terms of the form $(x;q)_\infty$, 
where $x\in \mathbb{C}[a^{\pm 1},b^{\pm 1},c^{\pm 1},q^{\pm 1},z^{\pm 1}]^*$, is some monomial in $a$, $b$, $c$, $q$ and $z$.
In particular the pre-factor $(b,az;q)_\infty/(c,z;q)_\infty$ 
in Heine's symmetry \eqref{eqHeine} is a $q$-hypergeometric term. Moreover it is clear that all non-zero rational functions
are $q$-hypergeometric terms, i.e.\ $\mathcal{R}^* \subset H$.

\begin{lemma}
The action of $G$ on $\mathcal{M}$ preserves $H$ and $\mathcal{R}$, 
i.e.\ if $L\in G$ and $h\in H$, then
also $L(h) \in H$, and similarly for $\mathcal{R}$. 
Moreover the group $G$ acts by group automorphisms on $H$ and by field automorphisms on 
$\mathcal{M}$ and $\mathcal{R}$.
\end{lemma}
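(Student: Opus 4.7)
The plan is to handle $\mathcal{R}$ first (the easy case), then use the normality of $N$ (proved in the previous lemma) to bootstrap from $\mathcal{R}$ up to $H$, and finally note that field/group automorphism structure is automatic once set-wise invariance is established.

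First I would show that $G$ preserves $\mathcal{R}$. Any $L \in G$ sends each generator $a,b,c,z$ of $\mathcal{R}$ to a Laurent monomial in $a,b,c,z,q$, and fixes $q$. Laurent monomials lie in $\mathcal{R}$, so $L(\mathcal{R}) \subseteq \mathcal{R}$. Applying the same observation to $L^{-1}$ (which is again in $G$ since $G$ is a group) gives the reverse inclusion, so $L$ restricts to a bijection of $\mathcal{R}$. Because the action on $\mathcal{M}$ is already known to be by field automorphisms, its restriction to $\mathcal{R}$ is also a field automorphism.

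Next I would prove that $G$ preserves $H$. Let $L \in G$ and $h \in H$; the task is to verify that $P(L(h))/L(h) \in \mathcal{R}$ for every $P \in N$. The trick is to conjugate: using that $L$ is a field automorphism of $\mathcal{M}$,
\[
\frac{P(L(h))}{L(h)} = \frac{L(L^{-1}PL(h))}{L(h)} = L\!\left(\frac{(L^{-1}PL)(h)}{h}\right).
\]
By normality of $N$ in $G$, the element $L^{-1}PL$ lies in $N$, so the argument of $L$ on the right is an element of $\mathcal{R}$ by the definition of $H$. Since $L$ preserves $\mathcal{R}$ (just shown), the whole expression lies in $\mathcal{R}$, hence $L(h) \in H$. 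The main (mild) obstacle is recognising that one must rewrite $P\circ L$ as $L \circ (L^{-1}PL)$ in order to exploit normality; once that is done the rest is formal.

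Finally I would dispatch the automorphism statements. Field automorphism of $\mathcal{M}$ is already given. The restriction of a field automorphism to an invariant subfield is again a field automorphism, so the action on $\mathcal{R}$ is by field automorphisms. For $H$, the group operation is multiplication in $\mathcal{M}^*$; since $L$ is multiplicative on $\mathcal{M}^*$ and bijective on the invariant subset $H$ (bijectivity on $H$ follows by applying the preservation argument to both $L$ and $L^{-1}$), $L$ acts on $H$ as a group automorphism, completing the proof.
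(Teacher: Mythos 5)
Your proposal is correct and follows essentially the same route as the paper: $\mathcal{R}$ is preserved because monomials map to monomials, and $H$ is preserved via the conjugation identity $P(L(h))/L(h) = L\bigl((L^{-1}PL)(h)/h\bigr)$ together with the normality of $N$ in $G$. The automorphism claims are handled the same way, by restricting the field-automorphism action on $\mathcal{M}$.
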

\begin{proof}
As $G$ only changes the variables in which a function is evaluated, 
it is clear that it commutes with adding or multiplying functions, whether
they are elements of $\mathcal{M}$, $H$ or $\mathcal{R}$. Thus $G$ acts on 
$\mathcal{M}$ by field automorphisms 

The fact that $G$ preserves rational functions follows from the fact that $G$ sends monomials to monomials. As for $\mathcal{M}$
we find that $G$ acts by field automorphisms. 

Since $N$ is a normal subgroup of $G$ we find that 
$P(L(f))/L(f) = L(P'(f))/L(f) = L( P'(f)/f)$, for $P'=L^{-1}PL \in N$. As $G$ preserves $\mathcal{R}$ this 
shows that if $f\in H$, so is $L(f)$. As before we find that $G$ acts by group automorphisms.
\end{proof}

Now we can define the group of transformations.
\begin{defi}
With the action of $G$ on $H$ as above we define the 
group $T$ of transformations as the semi-direct product $T:=H \rtimes G$.
It thus consists of elements $fL$, $f\in H$, $L\in G$, with multiplication given by
$f_1L_1 \cdot f_2L_2 = (f_1L_1(f_2)) (L_1L_2)$.

As a notational convention, when we write something like $fL\in T$ we imply that 
$f \in H$ and $L\in G$.

The subgroup $W \subset T$ is given by those transformations which leave
${}_2\phi_1$ invariant, or the symmetries of ${}_2\phi_1$, i.e. 
\[
W=\{t\in T ~|~ t {}_2\phi_1(a,b,c,q,z)={}_2\phi_1(a,b,c,q,z)\}.
\]
\end{defi}
Our goal is to determine $W$, the elements of which correspond to transformation
formulas of ${}_2\phi_1$. 
Two special transformations are given by
\begin{equation}\label{eqelw}
 t_h:=\frac{(b,az;q)_\infty}{(c,z;q)_\infty} 
\left( \begin{array}{ccccc}
  0 & -1 & 1 & 0 & 0 \\ 
  0 & 0 & 0 & 1 & 0 \\
  1 & 0 & 0 & 1 & 0 \\
  0 & 1 & 0 & 0 & 0 \\
  0 & 0 & 0 & 0 & 1 
\end{array} \right), \qquad 
\text{and} \qquad
t_{ab}:=\left( \begin{array}{ccccc}
  0 & 1 & 0 & 0 & 0 \\ 
  1 & 0 & 0 & 0 & 0 \\
  0 & 0 & 1 & 0 & 0 \\
  0 & 0 & 0 & 1 & 0 \\
  0 & 0 & 0 & 0 & 1 
\end{array} \right).
\end{equation}
Indeed Heine's transformation \eqref{eqHeine} exactly says $t_h {}_2\phi_1 ={}_2\phi_1$, so $t_h \in W$.
Moreover $t_{ab}$ just interchanges $a$ and $b$, thus 
$t_{ab} {}_2\phi_1(a,b;c;q,z) = {}_2\phi_1(b,a;c;q,z)={}_2\phi_1(a,b;c;q,z)$ implies $t_{ab} \in W$.

Let us now state the main theorem 
\begin{theorem}\label{thmain}
The symmetry group $W$ of the ${}_2\phi_1$ is generated by $t_h$ and $t_{ab}$ from \eqref{eqelw}, i.e.\ Heine's 
symmetry and the symmetry which interchanges $a$ and $b$ in the ${}_2\phi_1$.
\end{theorem}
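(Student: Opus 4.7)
The plan is to follow the PWZ-inspired strategy outlined in the introduction. Suppose $t = fL \in W$. Since $t$ fixes ${}_2\phi_1$, and ${}_2\phi_1$ satisfies the $q$-contiguous relations established in Section~\ref{secdiff}, applying $t$ to any such relation yields a new valid relation for ${}_2\phi_1$, but expressed in the parameters transformed by $L$ and scaled by the hypergeometric factor $f$. The goal is to match these $t$-transformed relations against the known contiguous relations; this should pin down $L$ up to the finite subgroup $\langle t_h, t_{ab}\rangle \cong S_3 \times S_2$, and then pin down the hypergeometric pre-factor $f$.

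I would exploit the semidirect structure $T = H \rtimes G$ and the normal subgroup $N \trianglelefteq G$ of pure $q$-dilations, working in two stages. \emph{Stage 1.} Reduce $L$ modulo $N$. The quotient $G/N$ acts by monomial substitutions on $(a,b,c,z)$, and the contiguous relations of ${}_2\phi_1$ have a rigid combinatorial shape: each is supported on a specific finite set of $q$-shifts of the parameters, with coefficients that are rational in $(a,b,c,z,q)$. For $L$ to map such a relation to another contiguous relation satisfied by ${}_2\phi_1$ at the transformed parameters, the induced monomial map must preserve this support, which I expect to force the class of $L$ in $G/N$ to lie in the image of $\langle t_h, t_{ab}\rangle$. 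Pre-multiplying $t$ by an appropriate element of this subgroup, we may then assume $L \in N$.

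\emph{Stage 2.} With $L = M_{k_a,k_b,k_c,k_z} \in N$, the identity $t({}_2\phi_1) = {}_2\phi_1$ reads
\[
f(a,b,c,z)\;{}_2\phi_1\!\left(\begin{array}{c} a q^{k_a}, b q^{k_b} \\ c q^{k_c} \end{array};q,z q^{k_z}\right) = {}_2\phi_1\!\left(\begin{array}{c} a,b \\ c \end{array};q,z\right)
\]
for some $f\in H$. Evaluation at $z=0$ (after accounting for the shift $k_z$) yields a first relation on $f$, and comparing higher Taylor coefficients in $z$ generates a cascade of rational constraints. Combined with the defining property $P(f)/f \in \mathcal{R}$ for all $P \in N$, and one more application of a contiguous relation in a single parameter to eliminate the integers $k_a,k_b,k_c,k_z$ in turn, this should force all shifts to vanish and $f\equiv 1$.

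The main obstacle I anticipate is Stage~1: showing rigorously that the monomial part of $L$ is constrained to the orbit generated by the known symmetries. For this, the contiguous relations produced in Section~\ref{secdiff} must be abundant enough to constrain $L$ in every direction of $G/N$, and simultaneously rigid enough that their images under arbitrary monomial substitutions cannot accidentally coincide with contiguous relations of a differently parametrized ${}_2\phi_1$. The introduction hints that this is handled by first using Heine's symmetry to obtain a full set of contiguous relations (one in each parameter), and it is this combinatorial completeness that I expect to do the heavy lifting.
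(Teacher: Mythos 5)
Your overall strategy---transporting the contiguous relations of Section \ref{secdiff} by a putative symmetry $t=fL$ and matching the result against the known relations---is indeed the paper's strategy, and your Stage~1 corresponds roughly to the paper's use of Corollary \ref{cormain} together with the uniqueness statement of Proposition \ref{prop3}. But there is a genuine gap that the plan cannot close as stated. Every constraint you propose to extract is of the form ``$t$ maps the ideal $\mathcal{I}$ of $q$-difference equations into itself,'' and these constraints do \emph{not} pin $L$ down to the orbit of $\langle t_h,t_{ab}\rangle$. The relations in $\mathcal{I}$ are second-order $q$-difference equations, so they admit a second solution independent of ${}_2\phi_1$; concretely, the transformation with $L(a)=aq/c$, $L(b)=bq/c$, $L(c)=q^2/c$, $L(z)=z$, equipped with a suitable theta-function prefactor, sends ${}_2\phi_1$ to a function annihilated by every element of $\mathcal{I}$, and therefore survives every test based on matching contiguous relations. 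The paper has to exclude it by a separate, non-formal argument (Lemma \ref{lemimp}): the prefactor is forced up to an elliptic multiplier $h$, and evaluating the would-be identity at $a=q$ forces $h(q,b,c,q,z)=0$ while $a=1$ forces $h(1,b,c,q,z)\neq 0$, contradicting ellipticity of $h$ in $a$. Without some such input (special values, analytic behaviour) your program stops one transformation short of the theorem.

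A secondary point: your Stage~1 reduction ``modulo $N$'' is not how the constraint actually materialises. What the transported relations control directly is the conjugate $Y=L^{-1}ZL\in N$, via the requirement that the unique three-term relation supported on $\{Y,1,Y^{-1}\}$ be carried to a rational multiple of $R_z$. Turning this into bounds on $Y$ requires the divisibility information of Proposition \ref{propwhatterms} (which factors $a-q^{-j}$ divide the coefficients), the observation that $L$ preserves factorisations into monomial differences, and in the paper a finite (computer-assisted) search over the remaining candidates; the informal ``support-preservation'' heuristic does not by itself produce these bounds. Your Stage~2 is fine in spirit but is largely subsumed once $L$ is determined exactly, since for a fixed $L$ the prefactor is unique.
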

The proof of this theorem is deferred to the end of this article (Section \ref{secproofmain}).
As it is already well-known that $t_h,t_{ab}\in W$, we only have to show that they generate $W$.
Note that the theorem implies that $W$ is isomorphic to $S_3 \times S_2$ and observe that both $t_h$ and $t_{ab}$ have order 2.

Let us end this section by introducing the ring of $q$-difference operators.
\begin{defi}
The ring $\mathcal{D}$ of $q$-difference operators is defined as the smash product $\mathcal{R} \# N$, using the 
action of $N$ on $\mathcal{R} \subset \mathcal{M}$ (which is the restriction of the action of $G$).

Thus as a vectorspace it equals $\mathcal{R} \otimes_{\mathbb{C}} \mathbb{C}[N]$.
Multiplication is defined via
\begin{equation}\label{eqprod2}
 (\sum_k r_k P_k) ( \sum_j s_j Q_j) = \sum_{k,j} r_k P_k(s_j) P_kS_j,
\end{equation}
(where the term $P_k(s_j)$ uses the action of $N$ on $\mathcal{R}$).

Finally we define the ideal $\mathcal{I} = \{ D\in \mathcal{D} ~|~ D{}_2\phi_1 =0\}$ of elements which 
contain ${}_2\phi_1$ in their kernel.
\end{defi}
Note that elements of $\mathcal{I}$ correspond to $q$-difference equations satisfied by ${}_2\phi_1$.
We will give a precise description of all the elements in $\mathcal{I}$ at a later stage. 
An example of the action of a $q$-difference operator is 
\begin{multline*}
 ((1-a) A - 1 + aZ) {}_2\phi_1\left( \begin{array}{c} a,b\\c\end{array};q,z\right) \\ = 
(1-a) {}_2\phi_1\left( \begin{array}{c} aq,b\\c\end{array};q,z\right)
-{}_2\phi_1\left( \begin{array}{c} a,b\\c\end{array};q,z\right)
+a{}_2\phi_1\left( \begin{array}{c} a,b\\c\end{array};q,qz\right).
\end{multline*}
In Proposition \ref{prop1} we show that this expression equals zero, thus $(1-a)A-1+aZ \in \mathcal{I}$.

Just as $N$ is a normal subgroup of $G$ we find 
\begin{lemma}
The group $\mathcal{D}^*$ of units in $\mathcal{D}$ equals the semi-direct product $\mathcal{R}^* \ltimes N$.
$\mathcal{D}^*$ is a normal subgroup of $T$. The induced conjugation action is denoted by
$t(D)= t D t^{-1}$ for $D\in \mathcal{D}^*$, $t\in T$ and can be uniquely linearly extended to an action of $t$ on $\mathcal{D}$.
\end{lemma}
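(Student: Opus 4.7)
The plan is to verify the three assertions in order; the identification of units is the only step requiring real work, while normality and linear extension are direct consequences.

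First, to establish $\mathcal{D}^* = \mathcal{R}^* \ltimes N$, I would check ``$\supseteq$'' directly from the product rule \eqref{eqprod2}: an element $rP$ with $r \in \mathcal{R}^*$ and $P \in N$ admits $P^{-1}(r^{-1})\,P^{-1}$ as two-sided inverse, and the restriction of the multiplication to the set of such monomials is exactly the semi-direct product law $(rP)(sQ) = (r\cdot P(s))(PQ)$ with $N$ acting on $\mathcal{R}^*$ via the already-established action on $\mathcal{R}$. For the opposite inclusion, I would fix a total order $\prec$ on $N \cong \mathbb{Z}^4$ compatible with the group law (say lexicographic) and, for any nonzero $D = \sum_P r_P P$, consider the maximum $P_+(D)$ and minimum $P_-(D)$ of its support. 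Because $\mathcal{R}$ is an integral domain and each $P \in N$ acts as a field automorphism, the extremal monomials of a product cannot be cancelled by cross-terms, so $P_\pm(D_1 D_2) = P_\pm(D_1)\,P_\pm(D_2)$. If $D_1 D_2 = 1$ then both extremal supports collapse to the identity of $N$, which together with $P_-(D_i) \preceq P_+(D_i)$ forces each $D_i$ to be supported on a single element, hence to lie in $\mathcal{R}^* \cdot N$.

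Second, for the normality of $\mathcal{D}^*$ in $T$, I would conjugate a monomial $rP \in \mathcal{D}^*$ by an arbitrary $fL \in T$. Using $(fL)^{-1} = L^{-1}(f^{-1})\,L^{-1}$ and the commutativity of $H$, a direct calculation gives
\[
(fL)(rP)(fL)^{-1} = L(r) \cdot \frac{f}{LPL^{-1}(f)} \cdot LPL^{-1}.
\]
Here $LPL^{-1}$ lies in $N$ because $N$ is normal in $G$ (already established), $L(r)$ lies in $\mathcal{R}^*$ because $L$ preserves $\mathcal{R}$ as a field automorphism, and the quotient $f/LPL^{-1}(f)$ lies in $\mathcal{R}^*$ by the very definition of $H$. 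The conjugate therefore sits in $\mathcal{R}^* \cdot N = \mathcal{D}^*$.

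Finally, for the extension: $\mathcal{D}$ decomposes as the direct sum $\bigoplus_{P \in N} \mathcal{R}\,P$, so every $D \in \mathcal{D}$ is a unique finite $\mathbb{C}$-linear combination of monomials $r_P P$, each of which is either zero or a unit. Setting $t(D) := \sum_P t(r_P P)$ (with $t(0):=0$ on zero summands) produces a $\mathbb{C}$-linear extension; any other linear map agreeing with conjugation on $\mathcal{D}^*$ must coincide with it on every monomial and hence on all of $\mathcal{D}$, giving uniqueness. The only non-routine step in the whole argument is the monomial-order trick in the first part, which rules out units with multi-term support; everything else flows from the constructions already laid down in this section.
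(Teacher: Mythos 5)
Your proposal is correct and follows essentially the same route as the paper: the lexicographic order on $N\cong\mathbb{Z}^4$ to show that the extremal terms of a product cannot cancel (so a unit must be a single monomial $rP$), the explicit conjugation computation $(fL)(rP)(fL)^{-1}=L(r)\,\frac{f}{LPL^{-1}(f)}\,LPL^{-1}$ combined with normality of $N$ in $G$ and the defining property of $H$, and linear extension over the decomposition $\mathcal{D}=\bigoplus_{P\in N}\mathcal{R}P$. If anything, you spell out the uniqueness of the linear extension and the role of $\mathcal{R}$ being an integral domain slightly more explicitly than the paper does.
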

\begin{proof}
To see that $\mathcal{D}^*= \mathcal{R}^* \ltimes N$ we define the degree of 
some element of $\mathcal{R}^* \ltimes N$ by using the isomorphism between $N$ and $\mathbb{Z}^4$ and using
the lexicographical ordering on $\mathbb{Z}^4$. Thus
$deg(M_{k_a,k_b,k_c,k_z})=(k_a,k_b,k_c,k_z)$ and $deg(r)=(0,0,0,0)$ for some rational function $r$.
Moreover we set $(k_a,k_b,k_c,k_z)>(l_a,l_b,l_c,l_z)$ if $k_a>l_a$, or $k_a=l_a$ and $k_b>l_b$, etc.. 

We want to show that the product of two elements of $\mathcal{D}$ can only equal 1 if they are contained
in $\mathcal{R}^* \rtimes N$. 
Note that the right hand side of \eqref{eqprod2} has a unique term of highest degree (the product of the two respective terms
of highest degree on the left hand side) and a unique term of lowest degree. The product can only be 1 if there is just
one term on the right hand side, so then the terms of highest and lowest degree are identical, 
which can only happen if the sums on the left hand side are both single terms. Thus we find $\mathcal{D}^*=\mathcal{R}^* \ltimes N$.

We calculate for $hL\in T$ and $D=rP\in \mathcal{D}^*$ that
\[hL \cdot rP \cdot (hL)^{-1} = h  L(r) L P L^{-1}( h^{-1}) LPL^{-1} \]
and use that $N$ is a normal subgroup of $G$ to see that $\mathcal{D}^*$ is a normal subgroup of 
$T$ (where we observe that $h  LPL^{-1}(h^{-1}) = h/LPL^{-1}(h)$ is a rational function as 
$h$ is a $q$-hypergeometric term).
\end{proof}
This provides us with the following important corollary, which allows us to easily obtain new $q$-difference equations 
satisfied by the ${}_2\phi_1$ from old ones by using symmetries.
\begin{cor}\label{cormain}
The restriction of the conjugation action of $T$ on $\mathcal{D}$ to $W$ 
leaves the ideal $\mathcal{I}$ invariant, i.e. for $w\in W$ and $I \in \mathcal{I}$ we have 
$w(I) \in \mathcal{I}$.
\end{cor}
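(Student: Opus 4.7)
The plan is to verify a compatibility identity between the $T$-action on $\mathcal{M}$ and the conjugation action of $T$ on $\mathcal{D}$, namely
\[
(t D t^{-1})(\phi) = t\bigl(D(t^{-1}(\phi))\bigr) \qquad \text{for all } t\in T,\ D\in \mathcal{D},\ \phi\in\mathcal{M},
\]
and then to plug in $t=w\in W$, $\phi={}_2\phi_1$, $D\in \mathcal{I}$. Once this identity is in hand the corollary is immediate: since $W$ is a subgroup of $T$ we have $w^{-1}\in W$, so $w^{-1}({}_2\phi_1)={}_2\phi_1$; hence $w(D)\cdot{}_2\phi_1 = w\bigl(D({}_2\phi_1)\bigr) = w(0)=0$, showing $w(D)\in \mathcal{I}$.

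To set up the identity, first I would record that the natural $\mathcal{D}$-action on $\mathcal{M}$ defined by $(rP)(\phi) = r\cdot P(\phi)$ is the restriction to $\mathcal{D}^* = \mathcal{R}^*\ltimes N \subset T$ of the $T$-action $(hL)(\phi)=h\cdot L(\phi)$. Both actions respect the semi-direct product multiplication, as checked by the same two-line computation (using that the $N$-action, resp.\ the $G$-action, is by field automorphisms of $\mathcal{M}$). In particular the $T$-action is a genuine left action, so $(xy)(\phi)=x(y(\phi))$ for every $x,y\in T$.

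With this in place the identity above is trivial when $D=rP\in \mathcal{D}^*$: the element $tDt^{-1}$ lies in $\mathcal{D}^*$ (by normality, already proved) and can therefore be evaluated either as a member of $\mathcal{D}$ or as a threefold product in $T$; the two evaluations agree by the group action property. To extend to arbitrary $D\in \mathcal{D}$, write $D = \sum_P r_P P$ as a finite sum with $r_P\in \mathcal{R}$, $P\in N$. The left-hand side is $\mathbb{C}$-linear in $D$ by definition of the linear extension of conjugation from $\mathcal{D}^*$ to $\mathcal{D}$; the right-hand side is $\mathbb{C}$-linear in $D$ because the $\mathcal{D}$-action on $\mathcal{M}$ is $\mathbb{C}$-linear in $D$, and because $t=hL\in T$ acts on $\mathcal{M}$ by a $\mathbb{C}$-linear map (multiplication by $h$ followed by a field automorphism). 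So the identity for general $D$ follows from the monomial case.

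There is no real obstacle here; the whole matter is formal bookkeeping, the one point worth double-checking being that the linear extension of the conjugation action from $\mathcal{D}^*$ to $\mathcal{D}$ is compatible with evaluation on functions, which is exactly the $\mathbb{C}$-linearity observation above. Once these two linearities are lined up, the corollary drops out by the three-line argument at the start.
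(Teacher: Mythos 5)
Your proof is correct and is essentially the paper's own argument: the key computation $w(D)\,{}_2\phi_1 = w\bigl(D(w^{-1}({}_2\phi_1))\bigr) = w\bigl(D({}_2\phi_1)\bigr) = w(0) = 0$ is exactly what the paper writes, and your extra bookkeeping merely makes explicit the compatibility $(tDt^{-1})(\phi)=t(D(t^{-1}(\phi)))$ that the paper takes for granted.
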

\begin{proof}
Just observe that for $w\in W$ and $I\in \mathcal{I}$ we find
\[ w(I) {}_2\phi_1 = w(I(w^{-1}({}_2\phi_1)))= w(I({}_2\phi_1))=w(0)=0. \qedhere\]
\end{proof}

\begin{figure}
\begin{tabular}{ccccc}
 & Difference &  & Transformations  & \\
  & operators &  &                    & \\
Operations on coefficients & $N$ & $\lhd$ & G & \\
&  $\qquad \;\;\;$    \# $\qquad \ltimes$ &   & $\ltimes$ &   \\
Functions as prefactor & $\mathcal{R}$ & $\mathcal{R}^* \subset$ & $H$ & $\subset \mathcal{M}^*$ \\
 & $=$ & $= \quad $  & $=$ &  \\
Full algebras & $\mathcal{D}$ & $ \mathcal{D}^* \lhd$ & $T$ & \\
& $\cup$ &  & $\cup$ & \\
Symmetries of ${}_2\phi_1$ & $\mathcal{I}$ & & $W$ & 
\end{tabular}
\label{figconst}
\caption{The algebraic constructs of Section \ref{sec1} and their relations.}
\end{figure}

\section{Difference operators}\label{secdiff}
In this section we consider $q$-difference equations satisfied by the ${}_2\phi_1$. This means 
we will study the structure of the ideal $\mathcal{I} \subset \mathcal{D}$.
The main result (Proposition \ref{prop3} together with Corollary \ref{cor3}) 
is that for any three distinct elements $X_1,X_2,X_3 \in N$ there exists a unique (up to multiplication by rational functions) 
element in $\mathcal{I}$ which is a linear combination of these $X_j$.
This corresponds to a unique linear relation between three ${}_2\phi_1$'s with $q$-shifted 
arguments. In particular it implies that ${}_2\phi_1$ is a solution to second order $q$-difference equations
in all its parameters. This fact is already known (see for example \cite{Vidunas}), however we have been unable to 
find proofs of this result in the literature, which are strong enough for our purposes, hence we give full proofs here.

Let us now define some elements in $\mathcal{I}$ which turn out to be a generating set of $\mathcal{I}$
in Corollary \ref{cor3}. Essentially the elements mentioned here correspond to $q$-contiguous relations
satisfied by the ${}_2\phi_1$. 
Recall Definition \ref{defABCZ}.  
\begin{prop}\label{prop1}
Define
\begin{align*}
P_a&:= (1-a)A - 1 + a Z, \\
P_b&:= (1-b)B - 1 + b Z, \\
P_c&:= z(c-b)(c-a)C +(c-1)(c^2+abz-(a+b) cz)  - (c-1)c (c-abz) Z,\\
Q_a&:= (-cq+ac+aq-a^2z) + a(abz-c)Z  + q (c-a) A^{-1},\\
Q_b&:= (-cq+bc+bq-b^2z) + b(abz-c)Z  + q (c-b) B^{-1},\\
Q_c&:= -q  + c Z + (q-c)C^{-1},\\
R_z&:= -(c+q-az-bz)  + (q-z) Z^{-1} + (c-abz)Z.
\end{align*}
Then $P_a,P_b,P_c,Q_a,Q_b,Q_c,R_z\in \mathcal{I}$.
\end{prop}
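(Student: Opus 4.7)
The plan is direct termwise verification on the series ${}_2\phi_1(a,b;c;q,z)=\sum_{k\geq 0} t_k$ with $t_k := (a,b;q)_k\, z^k/(q,c;q)_k$. Each generator of $N$ acts diagonally on the terms:
\[
A t_k = \frac{1-aq^k}{1-a}\, t_k,\qquad C t_k = \frac{1-c}{1-cq^k}\, t_k,\qquad Z t_k = q^k t_k,
\]
with analogous formulas for $B$ and for the inverse shifts (the backward shifts $X^{-1}$ producing denominators of the form $1-xq^{k-1}$). The crucial auxiliary identity is the ratio of consecutive summands
\[
z\, t_k = \frac{(1-q^{k+1})(1-cq^k)}{(1-aq^k)(1-bq^k)}\, t_{k+1},
\]
which converts any explicit factor of $z$ appearing in a $q$-difference operator into a shift of summation index.

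For $P_a$, the coefficient of $t_k$ in $P_a\,{}_2\phi_1$ is $(1-aq^k)-1+aq^k=0$, so the identity holds termwise; $P_b$ is the same with $b$ in place of $a$. For $Q_c$, using $C^{-1}t_k = q(1-cq^{k-1})/(q-c)\cdot t_k$, the coefficient of $t_k$ simplifies to $-q+cq^k+q(1-cq^{k-1})=0$ (with the edge case $k=0$ checked separately). These three cases require no reindexing.

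The remaining four operators each carry at least one explicit factor of $z$. For $P_c$, combining the three summands over the common denominator $1-cq^k$ yields a numerator which, after expansion, simplifies to $c^2\bigl[(1-q^k)(1-cq^k)-z(1-aq^k)(1-bq^k)\bigr]$; the ratio identity then turns the sum into the telescoping expression
\[
P_c\,{}_2\phi_1 = (c-1)c^2\sum_{k\geq 0}\bigl[(1-q^k)t_k-(1-q^{k+1})t_{k+1}\bigr],
\]
whose boundary contributions both vanish. The operators $Q_a,Q_b,R_z$ follow the same pattern: each $z\,t_k$ contribution is rewritten as a multiple of $t_{k+1}$ via the ratio identity and then reindexed $k\mapsto k-1$, so that the total coefficient of every $t_k$ (after clearing the $1-xq^{k-1}$ denominator from the single backward shift in play) becomes a polynomial identity in $q^k,a,b,c,z$ of small degree which vanishes by direct expansion.

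The main obstacle is purely algebraic bookkeeping, most pronounced for $P_c$ and $Q_a$: one must recognize that the simplified numerator contains a factor $(1-aq^k)(1-bq^k)$ (respectively $(1-aq^{k-1})$) that cancels against the ratio identity so as to close up the sum. No convergence issues arise, since all manipulations are valid on the domain $|z|<1$ where the series converges absolutely, and the resulting identity extends to all of $\mathbb{C}^4$ through the meromorphic continuation of ${}_2\phi_1$.
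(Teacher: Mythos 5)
Your proof is correct, but it takes a genuinely different route from the paper's. You verify all seven relations directly on the series $\sum_k t_k$, using the ratio $z\,t_k = \frac{(1-q^{k+1})(1-cq^k)}{(1-aq^k)(1-bq^k)}\,t_{k+1}$ to absorb explicit factors of $z$ and then reindexing or telescoping; I checked the key computations (the factorization of the $P_c$ numerator as $c^2[(1-q^k)(1-cq^k)-z(1-aq^k)(1-bq^k)]$, and the cancellations for $Q_a$ and $R_z$ after the shift $k\mapsto k-1$) and they work out exactly as you describe. The paper instead verifies only $P_a$ and $Q_c$ termwise --- precisely the two relations with no explicit $z$, where no reindexing is needed --- and manufactures the remaining five by conjugating $P_a$ with the known symmetries $t_h$ and $t_{ab}$ via Corollary \ref{cormain}, then taking $\mathcal{R}$-linear combinations of the resulting conjugates to eliminate unwanted shift operators (e.g.\ $P_c$ is a combination of $P_a$, $t_{ab}(t_h(P_a))$ and $t_h(t_{ab}(t_h(P_a)))$ killing the $A$ and $AC$ terms). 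The paper's route costs less expansion per relation and showcases the conjugation machinery that drives the rest of the argument; yours is more elementary and self-contained, requiring nothing beyond the series definition and in particular not presupposing Heine's transformation. Your closing remark about working on $|z|<1$ and extending by meromorphic continuation correctly disposes of the one analytic point, the vanishing of the telescoping boundary term.
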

\begin{proof}
We first explicitly prove two simple $q$-difference equations satisfied by ${}_2\phi_1$.
Subsequently we will use Corollary \ref{cormain}, 
together with the known fact that $t_h$ and $t_{ab}$ are elements of $W$
to find the other relations. 

We will first show that $P_a\in \mathcal{I}$. Observe that 
\[
(1-a) \frac{(qa,b;q)_k}{(q,c;q)_k}z^k - \frac{(a,b;q)_k}{(q,c;q)_k}z^k + 
a \frac{(a,b;q)_k}{(q,c;q)_k} (qz)^k =0,
\]
which can be proven by simply dividing everything by $\frac{(a,b;q)_k}{(q,c;q)_k}z^k$ and
checking that the resulting polynomial equation holds. Summing this 
equation over all $k\geq 0$ we obtain the relation
\begin{equation*}
0= (1-a)\mfi{aq}{b}{c}{z} - \mfi{a}{b}{c}{z} + a \mfi{a}{b}{c}{qz}.
\end{equation*}
This equation is equivalent to the statement $P_a =  (1-a) A - 1 + a Z \in \mathcal{I}$.
Using the $a\leftrightarrow b$ symmetry $t_{ab}$ of $\mfi{a}{b}{c}{z}$, we find that $P_b=t_{ab}(P_a) \in \mathcal{I}$ as well.
To show that $Q_c\in \mathcal{I}$ we note that
\[
-q \frac{(a,b;q)_k}{(q,c;q)_k} z^k + c \frac{(a,b;q)_k}{(q,c;q)_k}q^kz^k + (q-c) \frac{(a,b;q)_k}{(q,c/q;q)_k}z^k=0 
\]
for all $k$, and sum over all $k\geq 0$.

Recall that $t_h$ from \eqref{eqelw} is an element of $W$. Write $t_h = p_hL_h$.
We can thus apply Corollary \ref{cormain} to see that
\begin{align*}
 t_h(P_a) & = t_h( (1-a)A) - t_h(1) + t_h(aZ) \\
& =\frac{p_h}{L_hAL_h^{-1}(p_h)} L_h(1-a) L_hAL_h^{-1}  -1 + 
\frac{p_h}{L_hZL_h^{-1}(a)} L_hZL_h^{-1}  \nonumber \\
&= \frac{1-c/b}{1-c} C -1 + \frac{c/b-c}{1-c} BC \in \mathcal{I}. \nonumber
\end{align*}
Moreover we find that
\begin{equation*}
 t_h(t_{ab}(t_h(P_a))) = (1-azb/c) A - 1 + \frac{azb/c(1-c/b)}{1-c} AC \in \mathcal{I}
\end{equation*}
A direct calculation now gives that
\[
 P_c = \frac{c (1-c)(c-abz)}{a} P_a - \frac{c^2(1-a)(1-c) }{a} t_h(t_{ab}(t_h(P_a)))
+ az (c-b) (c-1) t_{ab}(t_h(P_a)) \in \mathcal{I}.
\]
Note that we took an $\mathcal{R}$-linear combination of $P_a$, $t_h(t_{ab}(t_h(P_a)))$ and $t_{ab}(t_h(P_a))$ in which
we eleminated the terms containing $A$ and $AC$.

To find $Q_a$ we first calculate
\[
t_h(t_{ab}(t_h(t_{ab}(t_h(P_a)))))= \frac{1-c/a}{1-z} A^{-1} Z - 1 + \frac{c/a(1-abz/c)}{1-z} Z \in \mathcal{I}.
\]
Thus we find
\[
 Q_a = q(a-c) A^{-1}P_a - \frac{a^2}{1-z} t_h(t_{ab}(t_h(t_{ab}(t_h(P_a))))),
\]
(where we use $A^{-1}P_a = (1-a/q)-A^{-1} + a/q A^{-1}Z$).
This immediately also gives $Q_b=t_{ab}(Q_a)\in \mathcal{I}$.

Finally we find that
\[
 R_z = (z-q) Z^{-1} t_h(t_{ab}(t_h(t_{ab}(t_h(P_a))))) - \frac{1}{a} Q_a \in \mathcal{I}. \qedhere
\]
\end{proof}

Now we will show in a few steps that the set of elements in $\mathcal{I}$ from Proposition \ref{prop1}
generate $\mathcal{I}$.

Recall that $\mathcal{D}^*=\mathcal{R}^* \ltimes N$.
\begin{lemma}\label{prop2}
The ideal $\mathcal{I}$ does not contain any units, i.e.\ $\mathcal{I} \cap \mathcal{D}^*=\emptyset$.
Moreover the ideal $\mathcal{I}$ does not contain elements of the form $D_1+D_2$ with $D_1,D_2\in \mathcal{D}^*$ and
$D_1+D_2\neq 0$.
\end{lemma}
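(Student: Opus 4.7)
The first assertion is immediate. Any $D\in\mathcal{I}\cap\mathcal{D}^*$ is of the form $D=rP$ with $r\in\mathcal{R}^*$ and $P\in N$, and the equation $D\cdot{}_2\phi_1=r\cdot P({}_2\phi_1)=0$ then contradicts the fact that the meromorphic functions $r$ and $P({}_2\phi_1)={}_2\phi_1(aq^{k_a},bq^{k_b};cq^{k_c};q,zq^{k_z})$ are both nonzero: $r$ by assumption, and $P({}_2\phi_1)$ because its value at $z=0$ equals $1$.

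For the second assertion, write $D_i=r_iP_i$ with $r_i\in\mathcal{R}^*$ and $P_i\in N$. If $P_1=P_2$, then $D_1+D_2=(r_1+r_2)P_1$, which is either zero (excluded by hypothesis) or a unit in $\mathcal{I}$, contradicting what was just proved. Assume therefore $P_1\neq P_2$. Note that $\mathcal{I}$ is a left ideal of $\mathcal{D}$, since if $D\cdot{}_2\phi_1=0$ then $(D'D)\cdot{}_2\phi_1=D'(0)=0$ for every $D'\in\mathcal{D}$. Left-multiplying $D_1+D_2$ by the unit $r_1^{-1}P_1^{-1}$ (and rescaling) produces an element of $\mathcal{I}$ of the form $1+sP$, with $s\in\mathcal{R}^*$ and $P:=P_1^{-1}P_2\in N\setminus\{e\}$. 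Applied to ${}_2\phi_1$ this yields $P({}_2\phi_1)/{}_2\phi_1=-1/s\in\mathcal{R}^*$, so it suffices to establish the following claim: for every nontrivial $P\in N$, the ratio $P({}_2\phi_1)/{}_2\phi_1$ is not a rational function of $(a,b,c,q,z)$.

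To prove the claim, write $P=M_{k_a,k_b,k_c,k_z}$. A further left-multiplication of $1+sP$ by $P^{-1}$ and rescaling yields an analogous element $1+\tilde sP^{-1}\in\mathcal{I}$, so we may replace $P$ with $P^{-1}$ whenever convenient; combined with conjugation by $t_{ab}\in W$, which preserves $\mathcal{I}$ by Corollary \ref{cormain} and has the effect of swapping $k_a$ and $k_b$, this allows us to arrange $k_a\geq0$. Specializing $a=q^{-n}$ for a large positive integer $n$, the denominator ${}_2\phi_1(q^{-n},b;c;q,z)$ terminates as a polynomial of degree exactly $n$ in $z$, while $P({}_2\phi_1)|_{a=q^{-n}}$ terminates as a polynomial of degree $n-k_a$ (or of degree $n$ if $k_a=0$, in which case at least one of $k_b,k_c,k_z$ is still nonzero). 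If the ratio were a rational function $p/d$ with $p,d\in\mathbb{C}[a,b,c,q,z]$, its specialization at $a=q^{-n}$ would be a rational function in $z$ whose numerator and denominator have degrees bounded by the fixed integers $\deg_z p$ and $\deg_z d$. But the specialized ratio, written in lowest terms, has numerator and denominator of degrees $n-k_a$ and $n$ respectively for generic $(b,c,q)$, which grow without bound in $n$, giving a contradiction. I expect the most technical step to be the verification that the two terminating polynomials in $z$ have no common factor for generic $(b,c,q)$; this I would establish by a resultant computation, or by a direct analysis of their roots at a convenient specialization of $(b,c,q)$ such as $b\to0$, where the roots can be compared and shown to be disjoint.
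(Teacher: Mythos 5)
Your first assertion and your reduction of the second to the claim that $P({}_2\phi_1)/{}_2\phi_1$ is not rational for nontrivial $P\in N$ are both correct, and the reduction is exactly the one the paper makes (there $\tilde D:=-D_2^{-1}D_1$ is viewed as an element of $W$). The gap is in your proof of that claim. Specializing $a=q^{-n}$ only bounds the degrees of the two terminating polynomials; to conclude that their ratio \emph{in lowest terms} has unbounded degree you must show that ${}_2\phi_1(q^{-n+k_a},bq^{k_b};cq^{k_c};q,zq^{k_z})$ and ${}_2\phi_1(q^{-n},b;c;q,z)$ have no common factor in $z$ of degree $n-O(1)$, uniformly in $n$. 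That is precisely the step you defer to ``a resultant computation'' or a $b\to0$ limit, and it is not a routine verification: it amounts to controlling the common zeros of two families of basic hypergeometric polynomials of growing degree, which is not obviously easier than the lemma itself. Worse, when $k_a=k_b=0$ (e.g.\ $P=C$ or $P=Z$, which you cannot remove by the $a\leftrightarrow b$ symmetry or by inverting $P$) both polynomials have degree exactly $n$, the crude degree comparison gives nothing at all, and the entire burden falls on the unproven coprimality statement. As written, the argument does not close.

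The paper avoids this by exploiting the ideal structure rather than attacking the ratio head-on. Since $\tilde D\in W$, Corollary \ref{cormain} gives $\tilde D(R_z)\in\mathcal{I}$. If $\tilde D(R_z)$ is proportional to $R_z$, matching the three coefficients is a finite monomial computation that forces the $N$-part of $\tilde D$ to be trivial, whence $D_1+D_2$ is a unit and the first part applies. Otherwise one eliminates $Z^{-1}$ against $R_z$ and then $Z$ against $P_a$ to manufacture an element $t_1A-t_2\in\mathcal{I}$ with $\gcd(t_1,t_2)=1$; setting $a=1$ makes the right-hand side the polynomial $t_2|_{a=1}$ (since ${}_2\phi_1(1,b;c;q,z)=1$) while the left-hand side involves ${}_2\phi_1(q,b;c;q,z)$, whose irrationality is the easy Lemma \ref{lem1} (specialize $c=q$ and apply the $q$-binomial theorem). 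Thus the only transcendence input needed is that single special case, in sharp contrast with the general coprimality statement your route requires. If you want to salvage your approach, you should likewise first reduce to a two-term relation in the single shift $A$ before specializing.
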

\begin{proof}
To show $\mathcal{I}$ does not contain any units, we observe that if $D\in \mathcal{I} \cap \mathcal{D}^*$, then so
is $D^{-1} D=1$. However ${}_2\phi_1$ is not identically zero, thus $1\not \in \mathcal{I}$.

Now suppose there exists some element $D_1 + D_2 \in \mathcal{I}$, with $D_1,D_2\in \mathcal{D}^*$, and $D_1+D_2\neq 0$. 
Left multiplying by $D_2^{-1}$ we find that $D_2^{-1}D_1 +1 \in \mathcal{I}$. Thus $\tilde D:=-D_2^{-1}D_1$ leaves
${}_2\phi_1$ invariant. Recall that $\mathcal{D}^*$ is a subgroup of the group of transformations $T$. 
We can therefore view $\tilde D$ as an element of $T$, and, as it leaves ${}_2\phi_1$ invariant, even 
of $W$. Now we can use the action of $T$ on $\mathcal{D}$ and Corollary \ref{cormain}, to note that 
$\tilde D (R_z) \in \mathcal{I}$. Write $\tilde D=\tilde p \tilde Y$, thus we calculate
\begin{align*}
 \tilde p &\tilde Y (R_z) \\ 
&=  \tilde Y ( az+bz-c-q)  + 
\frac{\tilde p}{\tilde Y Z^{1} \tilde Y^{-1}( \tilde p)} \tilde Y(q-z) Z^{-1}
+ \frac{\tilde p}{\tilde Y Z\tilde Y^{-1}(\tilde p)} \tilde Y (c-abz) Z \\
&= \tilde Y (az+bz-c-q) + \frac{1}{Z^{-1} (r)} \tilde Y (q-z) Z^{-1} + r \tilde Y(c-abz) Z \in \mathcal{I}, 
\end{align*}
where in the last equation we substitute $r=\frac{ \tilde p}{Z \tilde p}$, and we 
use that $\tilde Y, Z$ are elements of the abelian group $N$, so $\tilde Y Z \tilde Y^{-1}=Z$ and 
$\tilde Y Z^{-1} \tilde Y^{-1} = Z^{-1}$.

Now there are two possibilities, either this element is a multiple (over the field $\mathcal{R}$) of
$R_z$, or it is not. If it is, we find that the quotients of the coefficients before $1$, $Z$ and $Z^{-1}$ in this element and
in $R_z$ have to be equal. Thus we get the equations 
\begin{align*}
\frac{\tilde Y(az+bz-c-q)}{az+bz-c-q} =
\frac{ \tilde Y(q-z)}{Z^{-1}(r) (q-z)} =
\frac{r \tilde Y(c-abz)}{c-abz}.
\end{align*}
The equality between the first and last term gives the following expression for $r$
\begin{equation}\label{eq314}
r= \frac{(c-abz) \tilde Y(az+bz-c-q)}{(az+bz-c-q)\tilde Y(c-abz) }.
\end{equation}
The equality between the first and second term gives an expression for $Z^{-1}(r)$ 
\[
Z^{-1}(r) = \frac{(az+bz-c-q) \tilde Y(q-z)}{(q-z) \tilde Y(az+bz-c-q)},
\]
which after letting $Z$ act on both sides of the equation (recall $Z$ and $\tilde Y$ commute) becomes
\begin{equation}\label{eq315}
 r=\frac{(aqz+bqz-c-q)\tilde Y(1-z) }{(1-z) \tilde Y(aqz+bqz-c-q) }.
\end{equation}
Equating the right hand sides of \eqref{eq314} and \eqref{eq315} leads to the expression
\[
 \frac{(c-abz) (1-z) \tilde Y(aqz+bqz-c-q) \tilde Y(az+bz-c-q)}{(az+bz-c-q)(aqz+bqz-c-q)\tilde Y(1-z)\tilde Y(c-abz) } =
1.
\]
As $\tilde Y \in \mathcal{D}^*$ it acts by multiplying the monomials in $a$, $b$, $c$ and $z$ by powers of $q$.
In particular corresponding terms of the numerator and denominator can only differ by powers of $q$. 
For example there exists some $k\in \mathbb{Z}$ such that $q^k (1-z)=\tilde Y(1-z)$. 
As $\tilde Y(1-z) = 1-\tilde Y(z)$,  we have $k=0$ and $Y(z)=z$. Next we find either $\tilde Y(az+bz-c-q)=q^{k_1} (az+bz-c-q)$ and 
$\tilde Y(aqz+bqz-c-q) = q^{k_2}(aqz+bqz-c-q) $ for some integers $k_1$ and $k_2$, or 
$\tilde Y(az+bz-c-q)=q^{k_1} (aqz+bqz-c-q)$ and 
$\tilde Y(aqz+bqz-c-q) = q^{k_2}(az+bz-c-q) $. In both cases we find (looking at the term $q$) that $k_1=k_2=0$, 
and subsequently that $\tilde Y(c)=c$. The second case moreover gives $\tilde Y(a) = aq$ and $\tilde Y(aq)=a$, which 
gives a contradiction, so we can assume the first case holds. This gives $\tilde Y(a)=a$ and $\tilde Y(b)=b$.
We conclude that $\tilde Y=1$, as it leaves $a$, $b$, $c$ and $z$ invariant. However this implies that
$D_2=fD_1$ for some rational function $f$, so $D_1+D_2 \in \mathcal{D}^*$. The first part of the proposition now
shows that $D_1+D_2$ is not an element of $\mathcal{I}$.

Now suppose $\tilde D(R_z)$ is not a multiple of $R_z$. In this case we can find a linear combination 
(over the field $\mathcal{R}$) of $\tilde D(R_z)$ and $R_z$ in which we eliminate the $Z^{-1}$ term. (We would be unable to
cancel the $Z^{-1}$ term if the coefficient before $Z^{-1}$ in either of the equations vanishes identically. However
for $R_z$ we know this is clearly not the case. For $\tilde D(R_z)$ we find that 
$\tilde Y (q-z) /Z^{-1}(r)$ is also non-zero.) In particular we find a non-zero  element $s_1Z + s_2\in \mathcal{I}$ for some
$s_1,s_2\in \mathcal{R}$. Taking a linear combination
of this element and $P_a$ in which we cancel the $Z$-term, we find an element
$t_1A-t_2 \in \mathcal{I}$. (Once again the coefficient in front of $Z$ for $P_a$ is obviously non-zero, and $s_1\neq 0$, as then 
we would have found an element in $\mathcal{I}$ of only one term, which the first part of the proposition shows is impossible). 
Without loss of generality we can assume $t_1$ and $t_2$ are 
polynomials with greatest common divisor 1 (if not multiply (on the left) by the denominators of $t_1$ and $t_2$ and divide by their
common factors). 

Now we need the following lemma:
\begin{lemma}\label{lem1}
 The function $\mfi{q^j}{b}{c}{z}$ for $j\in \mathbb{N}$, $j>0$, is not a rational function 
(i.e.\ element of $\mathbb{C}(b,c,q,z)$). 
\end{lemma}
\begin{proof}
Suppose $\mfi{q^j}{b}{c}{z}$ is a rational function. Then $f(b,q,z) = \mfi{q^j}{b}{q^j}{z}$ is 
also a rational function. Note that $f(b,q,z)={}_1\phi_0(b;-;q,z)=(zb;q)_\infty/(z;q)_\infty$ by the $q$-binomial theorem \cite[(II.3)]{GenR}.
However $(zb;q)_\infty/(z;q)_\infty$ is clearly not a rational function of $b$, $q$ and $z$ (indeed as a function of
$b$ it has infinitely many zeros). This is a contradiction, so $\mfi{q^j}{b}{c}{z}$ can not be a rational function.
\end{proof}
The fact $t_1 A -t_2 \in \mathcal{I}$ implies an equation of the form 
\[
 t_1 \mfi{qa}{b}{c}{z} = t_2 \mfi{a}{b}{c}{z}.
\]
Inserting $a=1$ we find that on the right hand side we obtain a polynomial, while the left hand side is 
a polynomial times a term $\mfi{q}{b}{c}{z}$, which is not a rational function by Lemma \ref{lem1}. Therefore
the equation can only hold if $t_1 |_{a=1} =0$. However then the right hand side has to vanish at $a=1$ as well, and 
as $\mfi{1}{b}{c}{z}=1$, we find $t_2|_{a=1}=0$. This implies that $(a-1) | t_1$ and $(a-1)|t_2$ in contradiction to 
the assumption that $gcd(t_1,t_2)=1$.
\end{proof}

The previous proposition shows that there exist no first order $q$-difference equations satisfied by ${}_2\phi_1$
(i.e.\ elements in $\mathcal{I}$ which are a linear combination of two elements from $\mathcal{D}^*$).
As we have seen in Proposition \ref{prop1} the ${}_2\phi_1$ does satisfy several 
second order $q$-difference equations (elements in $\mathcal{I}$ which are a linear combination of 
three elements from $\mathcal{D}^*$).
The following proposition shows that it even satisfies such an equation for all possible triples of elements from $\mathcal{D}^*$
\begin{prop}\label{prop3}
Let $\mathcal{I}'$ be the left ideal of $\mathcal{D}$ generated by 
$P_a$, $P_b$, $P_c$, $Q_a$, $Q_b$, $Q_z$ and $R_z$.
For any three distinct $X_1, X_2, X_3 \in N$ there exist non-zero polynomials
$p_1$, $p_2$ and $p_3$ such that $p_1X_1+p_2X_2+p_3X_3 \in \mathcal{I}'$. 
The polynomials are unique up to multiplication by elements of $\mathcal{R}$.
\end{prop}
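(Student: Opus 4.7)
My plan is to prove the stronger statement that the left $\mathcal{R}$-module $M := \mathcal{D}/\mathcal{I}'$ is generated by just the two classes $[1]$ and $[Z]$; the proposition then follows by linear algebra over the field $\mathcal{R}$ together with Lemma~\ref{prop2}.

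The key step is: for every $X = A^{k_a}B^{k_b}C^{k_c}Z^{k_z} \in N$, the image $[X]$ lies in $\mathcal{R}[1] + \mathcal{R}[Z]$. I would prove this by successively eliminating the coordinates $k_a$, $k_b$, $k_c$, and finally $k_z$, using left translates of the generators of $\mathcal{I}'$ by elements of $N$ (which still lie in $\mathcal{I}'$). For instance, to lower $k_a \geq 1$, set $Y = A^{k_a-1} B^{k_b} C^{k_c} Z^{k_z}$ and compute in $\mathcal{D} = \mathcal{R}\# N$:
\[
Y P_a = (1 - q^{k_a-1} a)\, X \;-\; Y \;+\; q^{k_a-1} a \cdot YZ \in \mathcal{I}'.
\]
Since $1 - q^{k_a-1} a$ is a nonzero element of the field $\mathcal{R}$, this rewrites $[X]$ as an $\mathcal{R}$-combination of $[Y]$ and $[YZ]$, each with strictly smaller $|k_a|$. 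An analogous computation using $Q_a$ handles $k_a \leq -1$ (with leading coefficient $q(c - q^{k_a+1} a)$), and the symmetric arguments with $P_b,Q_b$ and $P_c,Q_c$ eliminate the $B$- and $C$-exponents. After these reductions we are left with combinations of pure $Z$-powers $[Z^m]$, and a further induction on $|m|$ using the shifts $Z^m R_z \in \mathcal{I}'$ (whose relevant leading coefficient is either $c - q^m abz$ or $q - q^m z$, both nonzero in $\mathcal{R}$) reduces every $[Z^m]$ to $\mathcal{R}[1] + \mathcal{R}[Z]$.

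Granting this reduction, $M$ has $\mathcal{R}$-dimension at most $2$, so for any three distinct $X_1, X_2, X_3 \in N$ the three images $[X_i]$ satisfy a nontrivial $\mathcal{R}$-linear relation $\sum_i p_i X_i \in \mathcal{I}'$; clearing denominators makes the $p_i$ polynomial. If some $p_i$ vanished, the remaining two-term expression would be a nonzero element of the form $D_1 + D_2$ with $D_j \in \mathcal{D}^*$ lying in $\mathcal{I}' \subseteq \mathcal{I}$, contradicting Lemma~\ref{prop2}; hence all three $p_i$ are nonzero. For uniqueness, given a second such relation with coefficients $q_i$, the combination $q_3 \sum_i p_i X_i - p_3 \sum_i q_i X_i$ lies in $\mathcal{I}'$ and has only the two terms $X_1, X_2$, so by Lemma~\ref{prop2} both coefficients $q_3 p_i - p_3 q_i$ must vanish, showing $(p_1,p_2,p_3)$ and $(q_1,q_2,q_3)$ are $\mathcal{R}$-proportional. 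The main obstacle is verifying that the various ``leading'' coefficients in the reduction step are indeed nonzero in $\mathcal{R}$; this is clear from their explicit form but must be checked in each case.
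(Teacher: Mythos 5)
Your proof is correct and follows essentially the same route as the paper: both establish, by induction using left multiples of the generators of $\mathcal{I}'$, that modulo $\mathcal{I}'$ every class $[X]$ for $X\in N$ lies in $\mathcal{R}[1]+\mathcal{R}[Z]$, and then deduce the non-vanishing of all three coefficients and the uniqueness from Lemma~\ref{prop2}. Your coordinate-by-coordinate elimination (first $k_a$, then $k_b$, $k_c$, and finally $k_z$) is a somewhat cleaner bookkeeping than the paper's induction on the total degree $|k_a|+|k_b|+|k_c|+|k_z|$, which combines the four elements $SE_0$, $ZE_1$, $E_1$ and $ZR_z$ in a single step, but the underlying argument is the same.
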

Note that $\mathcal{I}'\subseteq \mathcal{I}$ and indeed Corollary \ref{cor3} shows that $\mathcal{I}'=\mathcal{I}$.
\begin{proof}
First we prove that there exists for each $X\in N$ an element in $\mathcal{I}'$ of the form $p_{x,x} X + p_{z,x} Z +p_{1,x} $, where
$p_{1,x}$, $p_{z,x}$ and $p_{x,x}$ are polynomials, which are not all zero. 
If $X \in N \backslash \{Z,1\}$ this implies that all polynomials
$p_{x,x}$, $p_{z,x}$ and $p_{1,x}$ are non-zero by Proposition \ref{prop2} and the fact that $\mathcal{I}'\subseteq \mathcal{I}$.
We will prove this by induction on the degree of $X$, which is 
defined by $\deg(M_{k_a,k_b,k_c,k_z}) =|k_a|+|k_b|+|k_c|+|k_z|$. 

The statement holds clearly for $X=1$ and $X=Z$, for then we can take $p_{x,1}=1$, $p_{z,1}=0$ and $p_{1,1}=-1$, respectively
$p_{x,z}=1$, $p_{1,z}=0$ and $p_{z,z}=-1$. Moreover the definition of $\mathcal{I}'$ implies that the statement holds for 
all other $X$ of degree 1.

Suppose $X = SY$, for some $S\in \{A^{\pm 1},B^{\pm 1},C^{\pm 1},Z^{\pm 1}\}$ and that
$\deg(X)>\max(1,\deg(Y))$. 
By the induction hypothesis we can assume there exists an element of the form
$E_0:=p_{x,y} Y + p_{z,y} Z + p_{1,y} \in \mathcal{I}'$. 
Moreover from the definition we know that there exists an element
$E_1:=p_{x,s} S + p_{z,s} Z + p_{1,s} \in \mathcal{I}'$ (if $S=Z$ we have $S-Z \in \mathcal{I}'$, in each case we can assume 
$p_{x,s} \neq 0$) 
and that $R_z=p_{x,z^{-1}} Z^{-1} + p_{z,z^{-1}} Z + p_{1,z^{-1}} \in \mathcal{I}'$.
Left multiplying by $S$, respectively $Z$ now shows that
\begin{align*}
 E_2&:=SE_0=S(p_{x,y})  SY + S(p_{z,y}) SZ + S(p_{1,y}) S\in \mathcal{I}',  \\
E_3&:=ZE_1=Z(p_{x,s}) SZ + Z(p_{z,s}) Z^2 + Z(p_{1,s}) Z \in \mathcal{I}', \\ 
E_4&:= ZR_z=Z(p_{x,z^{-1}}) + Z(p_{z,z^{-1}}) Z^2 + Z(p_{1,z^{-1}})Z \in \mathcal{I}'.
\end{align*}
Combining these equations shows
\begin{align*}
& Z(p_{z,z^{-1}})p_{x,s} Z(p_{x,s}) E_2  - Z(p_{z,z^{-1}})p_{x,s} S(p_{z,y}) E_3  
-Z(p_{z,z^{-1}}) S(p_{1,y}) Z(p_{x,s}) E_1\\ & \qquad  + p_{x,s} S(p_{z,y})Z(p_{z,s}) E_4 \\ &
 =
Z(p_{z,z^{-1}}) p_{x,s} S(p_{x,y})Z(p_{x,s}) X \\ & \quad 
+ \left( p_{x,s} S(p_{z,y})Z(p_{z,s}) Z(p_{1,z^{-1}}) -  Z(p_{z,z^{-1}}) p_{x,s} S(p_{z,y}) Z(p_{1,s}) - Z(p_{z,z^{-1}})p_{z,s}S(p_{1,y})Z(p_{x,s}) \right) Z \\ & \quad
+ (p_{x,s} S(p_{z,y})Z(p_{z,s})Z(p_{x,z^{-1}}) - Z(p_{z,z^{-1}}) p_{1,s}S(p_{1,y})Z(p_{x,s})) 
\in \mathcal{I}'
\end{align*}
Note that the coefficient $Z(p_{z,z^{-1}}) p_{x,s} S(p_{x,y})Z(p_{x,s})$ before $X$ is 
non-zero (as product of non-zero polynomials; in the above equations only the polynomials 
$p_{1,y}$ or $p_{1,s}$ could be zero if $Y$ or $S$ equals $Z$). Therefore this gives us an appropriate equation for $X$.

Now let $X_1$, $X_2$ and $X_3\in N$ be arbitrary and distinct. Then we have elements
\begin{align*}
 F_1 &:= p_{x,x_1x_3^{-1}} X_1X_3^{-1} + p_{z,x_1x_3^{-1}} Z + p_{1,x_1x_3^{-1}} \in \mathcal{I}',\\
 F_2 & := p_{x,x_2x_3^{-1}} X_2X_3^{-1} + p_{z,x_2x_3^{-1}} Z + p_{1,x_2x_3^{-1}} \in \mathcal{I}'.
\end{align*}
In particular also 
\begin{align*}
X_3( p_{z,x_2x_3^{-1}}F_1 - p_{z,x_1x_3^{-1}} F_2) & =
X_3( 
p_{z,x_2x_3^{-1}} p_{x,x_1x_3^{-1}} X_1X_3^{-1}  - p_{z,x_1x_3^{-1}} p_{x,x_2x_3^{-1}} X_2X_3^{-1} 
\\ & \qquad + p_{z,x_2x_3^{-1}}p_{1,x_1x_3^{-1}} -p_{z,x_1x_3^{-1}}p_{1,x_2x_3^{-1}}) \\&  =
X_3( 
p_{z,x_2x_3^{-1}} p_{x,x_1x_3^{-1}}) X_1  - X_3(p_{z,x_1x_3^{-1}} p_{x,x_2x_3^{-1}}) X_2
\\ & \qquad + X_3(p_{z,x_2x_3^{-1}}p_{1,x_1x_3^{-1}} -p_{z,x_1x_3^{-1}}p_{1,x_2x_3^{-1}}) X_3 \in \mathcal{I}'.
\end{align*}
Note that 
$X_1X_3^{-1} \neq 1$ and $X_2X_3^{-1} \neq 1$, so $p_{x,x_1x_3^{-1}}, p_{z,x_1x_3^{-1}},p_{x,x_2x_3^{-1}} ,p_{z,x_2x_3^{-1}} \neq 0$.
As both $p_{z,x_2x_3^{-1}}$ and $p_{x,x_1x_3^{-1}}$ are non-zero this implies that the coefficient before $X_1$ in 
this equation is non-zero. Thus by Proposition \ref{prop2} all coefficients in this equation 
are non-zero and this is the desired element of $\mathcal{I}'$.

Uniqueness of the coefficients $p_j$ of an element $D_1=p_1X_1+p_2X_2+p_3X_3 \in \mathcal{I}'$ follows from the fact that if there also exists an 
element $D_2=r_1X_1+r_2X_2+r_3X_3 \in \mathcal{I}'$ the element $D_1-\frac{p_1}{r_1}D_2$ is an element of $\mathcal{I}'$, of the form
$s_2X_2+s_3X_3$ (with $s_j = p_j - p_1r_j/r_1$), and thus vanishes by Proposition \ref{prop2}.
\end{proof}
\begin{cor}\label{cor3}
The ideal $\mathcal{I}$ is generated by $P_a$, $P_b$, $P_c$, $Q_a$, $Q_b$, $Q_z$ and $R_z$ (i.e. $\mathcal{I}=\mathcal{I}'$).
\end{cor}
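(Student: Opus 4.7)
The plan is to prove $\mathcal{I} \subseteq \mathcal{I}'$, since the reverse inclusion $\mathcal{I}' \subseteq \mathcal{I}$ is immediate from Proposition \ref{prop1}. I would proceed by induction on the number of distinct group elements $X \in N$ appearing in the support of a general element $D \in \mathcal{I}$, which when written in the normal form $D = \sum_j r_j X_j$ (with $r_j \in \mathcal{R}$ and $X_j \in N$ distinct) is a well-defined nonnegative integer.

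For the base cases with at most two terms, I would invoke Lemma \ref{prop2}: an element of $\mathcal{I}$ with one term is a unit times $X_1$, hence a unit, which is ruled out; an element with two nonzero terms is of the form $D_1 + D_2$ with $D_1, D_2 \in \mathcal{D}^*$, which also contradicts Lemma \ref{prop2} unless it is zero. In either case $D = 0 \in \mathcal{I}'$.

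For the inductive step, suppose $D = \sum_{j=1}^n r_j X_j \in \mathcal{I}$ has $n \geq 3$ distinct terms and the claim holds for all elements of $\mathcal{I}$ with fewer terms. By Proposition \ref{prop3}, applied to the three distinct elements $X_1, X_2, X_3 \in N$, there exists $D' = p_1 X_1 + p_2 X_2 + p_3 X_3 \in \mathcal{I}'$ with all $p_j$ nonzero polynomials. Then $D - (r_1/p_1) D'$ lies in $\mathcal{I}$ (as both $D$ and $D'$ do, and $\mathcal{I}$ is a left ideal) and its $X_1$-coefficient is zero, so it is supported on at most $n-1$ elements of $N$ (namely $X_2, X_3, X_4, \ldots, X_n$). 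By the induction hypothesis it belongs to $\mathcal{I}'$, and therefore so does $D$.

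I do not expect a serious obstacle here: the two ingredients (no-low-order-relations in Lemma \ref{prop2} and existence of three-term relations in Proposition \ref{prop3}) fit together cleanly, and the only subtlety is being careful that after subtracting $(r_1/p_1) D'$ the resulting element genuinely has strictly fewer distinct terms in $N$, which is true because the subtraction kills $X_1$ exactly and does not introduce any new $X \in N$ beyond those already appearing in $D$. The uniqueness clause of Proposition \ref{prop3} is not needed for this corollary, only the existence part.
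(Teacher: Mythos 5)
Your proposal is correct and follows essentially the same route as the paper: induction on the number of distinct elements of $N$ in the support, with Lemma \ref{prop2} handling lengths at most two and a three-term relation from Proposition \ref{prop3} used to cancel one term in the inductive step (the paper forms $p_1 D - r_1 D'$ rather than $D - (r_1/p_1)D'$, an immaterial difference). No gaps.
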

\begin{proof}
We observe that $\mathcal{I}'\subseteq \mathcal{I}$ so all we have to show is that $\mathcal{I}$ does not contain any more elements.
Define the lenght of an element $\sum_{j} r_j P_j \in \mathcal{D}$, where $r_j \in \mathcal{R}$ and with distinct $P_j \in N$
to be 
the number of different terms in the sum. We will prove by induction on the length of the elements in $\mathcal{D}$ that
there are no elements of length $n$ in $\mathcal{I}$ which are not also in $\mathcal{I}'$. 
Indeed Proposition \ref{prop2} shows that $\mathcal{I}$ does not contain elements of length smaller or equal than 2, so 
there are definitely no elements of length smaller than or equal to 2 which are not also part of $\mathcal{I}'$. 

Suppose all elements of length smaller than $n$ in $\mathcal{I}$ are also elements of $\mathcal{I}'$. 
Moreover suppose $D=\sum_{j=1}^n r_j P_j \in \mathcal{I}$ is an element of length $n$ and $D\not \in \mathcal{I}'$. We know
there exist non-zero polynomials $p_1$, $p_2$ and $p_3$ such that $D'=p_1P_1+p_2P_2+p_3P_3\in \mathcal{I}'$.
Consider the linear combination $D''=p_1D-r_1 D'$. We see that $D''\in \mathcal{I}$ and $D''\not \in \mathcal{I}'$, while
the length of $D''$ is less than $n$ (as it is a linear combination of $P_2, P_3, \ldots,P_n$). This is a contradiction, so 
there exist no elements of length $n$ in $\mathcal{I}$ which are not also an element of $\mathcal{I}'$.

Thus we conclude that the statement of the corollary holds.
\end{proof}
Note that the proof of Proposition \ref{prop3} 
also gives an explicit algorithm to calculate all three term $q$-difference equations
satisfied by ${}_2\phi_1$. However, the resulting calculations become tedious even for small degrees of $X_1,X_2,X_3 \in N$.

Now we determine some properties of the explicit coefficients $p_j$ occuring in these $q$-difference equations.
\begin{prop}\label{propwhatterms}
Suppose we have an element $p_1X_1+p_2X_2+p_3X_3 \in \mathcal{I}$, where $p_1,p_2,p_3$ are polynomials 
with no common divisors and $X_1,X_2,X_3 \in N$. Let $X_j=M_{k_{a,j},k_{b,j},k_{c,j},k_{z,j}}$ for some
$k_{a,j}, k_{b,j}, k_{c,j}, k_{z,j} \in \mathbb{Z}$ and $j=1,2,3$.
Moreover assume that $k_{a,1}> k_{a,2} > k_{a,3}$.
Then $a-q^{-j} ~|~ p_1$ for $k_{a,2}\leq j < k_{a,1}$, and $a-q^{-j} ~ \not | ~ p_1 $ for $k_{a,3} \leq j <k_{a,2}$.
Moreover $a-q^{-j} ~\not |~ p_2$ for $k_{a,3} \leq j < k_{a,1}$ and
$a-q^{-j} ~\not |~ p_3$ for $k_{a,2} \leq j< k_{a,1}$. 

A similar result holds for $b$ and $a$ interchanged.
\end{prop}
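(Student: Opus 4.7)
The plan is to specialize the identity $p_1 X_1\phi + p_2 X_2\phi + p_3 X_3\phi = 0$ (where $\phi = {}_2\phi_1(a,b;c;q,z)$) to $a = q^{-j}$ for integers $j$ in the two critical ranges, and then read off divisibility from the observation that some of the shifted series terminate and become polynomials in $(b,c,z)$, while others remain transcendental by Lemma \ref{lem1}. Concretely, $X_i\phi|_{a=q^{-j}} = {}_2\phi_1(q^{k_{a,i}-j}, bq^{k_{b,i}}; cq^{k_{c,i}}; q, zq^{k_{z,i}})$ is a polynomial in $z$ of degree $j - k_{a,i}$ with leading coefficient a non-zero rational function of $b,c$ whenever $k_{a,i}\leq j$, and fails to lie in $\mathbb{C}(b,c,q,z)$ whenever $k_{a,i}>j$ (Lemma \ref{lem1} applies after the invertible monomial substitutions $b\mapsto bq^{k_{b,i}}$ etc.).

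For the range $k_{a,2}\leq j < k_{a,1}$ only $X_1\phi$ remains transcendental at $a=q^{-j}$. If one assumes $(a-q^{-j})\nmid p_1$, the specialized identity expresses $X_1\phi|_{a=q^{-j}}$ as a rational function in $(b,c,z)$, contradicting Lemma \ref{lem1}; hence $(a-q^{-j})\mid p_1$. To then rule out $(a-q^{-j})\mid p_2$, assume the contrary: the specialization collapses to $p_3(q^{-j},\cdot)\, X_3\phi|_{a=q^{-j}} = 0$, and since the polynomial $X_3\phi|_{a=q^{-j}}$ is non-zero, this forces $p_3(q^{-j},\cdot)\equiv 0$, contradicting the no-common-divisor hypothesis on $(p_1,p_2,p_3)$. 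The symmetric argument (swapping the roles of $p_2$ and $p_3$) gives $(a-q^{-j})\nmid p_3$.

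For the range $k_{a,3}\leq j < k_{a,2}$, both $X_1\phi$ and $X_2\phi$ are transcendental at $a=q^{-j}$, while $X_3\phi|_{a=q^{-j}}$ is a non-zero polynomial. Suppose $(a-q^{-j})\mid p_1$: then the specialized identity reads $p_2(q^{-j},\cdot)\, X_2\phi|_{a=q^{-j}} = -p_3(q^{-j},\cdot)\, X_3\phi|_{a=q^{-j}}$, whose right-hand side is rational, so by Lemma \ref{lem1} we must have $p_2(q^{-j},\cdot)\equiv 0$, whence $p_3(q^{-j},\cdot)\equiv 0$, again contradicting the no-common-divisor hypothesis. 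The same pattern rules out $(a-q^{-j})\mid p_2$. The statement with $a$ and $b$ interchanged is obtained by repeating the argument with the specialization $b=q^{-j}$, using the analogue of Lemma \ref{lem1} in the variable $b$ (which is proved identically).

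The main obstacle is really just the case-analysis bookkeeping: tracking, in each range of $j$, which of the three shifted $\phi$ become terminating polynomials and chaining the contradictions through the common-divisor assumption. The only analytical input beyond Lemma \ref{lem1} is the auxiliary check that the terminating ${}_2\phi_1(q^{-n},b;c;q,z)$ is not identically zero as an element of $\mathbb{C}(b,c,q)[z]$, which is immediate from the explicit $z^n$ coefficient $(q^{-n},b;q)_n/(q,c;q)_n$.
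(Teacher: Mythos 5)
Your proposal is correct and follows essentially the same route as the paper's own proof: specialize $a=q^{-j}$, observe which of the three shifted series terminate, invoke Lemma \ref{lem1} for the non-terminating ones, and chain the resulting vanishing conditions into a contradiction with $\gcd(p_1,p_2,p_3)=1$. Your added remarks (the invertible monomial substitutions needed to apply Lemma \ref{lem1}, and the explicit non-vanishing of the terminating polynomial) only make explicit details the paper leaves implicit.
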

\begin{proof}
Consider the equation $(p_1X_1+p_2X_2+p_3X_3)  {}_2\phi_1(a,b;c;q,z) =0$. 
If we insert $a=q^{-j}$ for some $k_{a,2} \leq j<k_{a,1}$ then the series
$X_2\ {}_2\phi_1(a,b;c;q,z) (={}_2\phi_1(aq^{k_{a,2}},bq^{k_{b,2}};cq^{k_{c,2}};q,zq^{k_{z,2}}) )$ and $X_3\ {}_2\phi_1(a,b;c;q,z)$ terminate (i.e. they are polynomial), 
while $X_1\ {}_2\phi_1(a,b;c;q,z)$ does not.
Indeed by Lemma \ref{lem1} $X_1\ {}_2\phi_1(a,b;c;q,z)|_{a=q^{-j}}$ is not even a rational function of 
$b,c,q$ and $z$. However $p_1 X_1\ {}_2\phi_1(a,b;c;q,z)|_{a=q^{-j}}$ clearly is a rational function. This can 
only hold if $p_1 |_{a=q^{-j}} =0$, so if $a-q^{-j} ~|~ p_1$.

Now suppose $(a-q^{-j})$ is also a factor of $p_2$ or $p_3$ 
(it cannot be both as $p_1$, $p_2$ and $p_3$ have no common divisors). In this case we 
find that one non-zero polynomial equals zero, which is impossible. Thus $(a-q^{-j})$ does not divide
$p_2$ or $p_3$.

If on the other hand we insert $a=q^{-j}$ for some $k_{a,3} \leq j < k_{a,2}$ then $X_3\ {}_2\phi_1(a,b;c;q,z)$ terminates, while the
other two series $X_1\ {}_2\phi_1(a,b;c;q,z)$ and $X_2\ {}_2\phi_1(a,b;c;q,z)$ don't. If the polynomials $p_1$ or 
$p_2$ now vanish for this value of $a$, we again find that the other series should either be a rational function, which 
it is not, or the polynomial in front of that term (respectively $p_2$ or $p_1$) should also vanish for this value of $a$. 
However if both $p_1$ and $p_2$ vanish for this value of $a$, then so should $p_3$, which is impossible as they have no
common divisors. Thus neither $p_1$ nor $p_2$ can be divisible by $a-q^{-j}$ for these values of $j$.
\end{proof}

\section{Proof of the main theorem}\label{secproofmain}
This section contains the proof of Theorem \ref{thmain}. The main ingredients of the proof 
are Corollary \ref{cormain} and Proposition \ref{propwhatterms}.

\begin{proof}[Proof of Theorem \ref{thmain}]
We begin by proving the following important lemma
\begin{lemma}
Suppose $pL \in W$ (recall this implies $p\in H$ and $L\in G$). 
Call $Y=L^{-1}ZL$ (so $Y\in N$) and let $f Y + g  + h Y^{-1} \in \mathcal{I}$ 
(note there exists such an element due to Proposition \ref{prop3}).
Then the following equality between rational functions
\begin{equation}\label{eqjan3}
\frac{(az+bz-c-q)(aqz+bqz-c-q)}{(c-abz)q(1-z)} = 
L \left( \frac{g Y(g)}{f Y(h)} \right)
\end{equation}
holds.
\end{lemma}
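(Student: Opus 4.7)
The plan is to apply Corollary~\ref{cormain} to the inverse $w^{-1}=L^{-1}(p^{-1})\cdot L^{-1}$ of the given symmetry $w=pL\in W$, which lies in $W$ because $W$ is a subgroup of $T$. Using $w^{-1}$ rather than $w$ is essential: conjugation by $L^{-1}$ sends $Z$ to $L^{-1}ZL=Y$, whereas conjugation by $L$ would produce $LZL^{-1}$, which is the other conjugate. Writing $R_z = f_0 Z + g_0 + h_0 Z^{-1}$ with $f_0=c-abz$, $g_0=az+bz-c-q$ and $h_0=q-z$, one sees directly that the left-hand side of \eqref{eqjan3} equals $g_0 Z(g_0)/(f_0 Z(h_0))$, so the lemma is equivalent to $gY(g)/(fY(h))=L^{-1}\bigl(g_0 Z(g_0)/(f_0 Z(h_0))\bigr)$.

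Setting $\pi:=L^{-1}(p^{-1})$ and using the conjugation rule $t(rP)=\bigl(\pi L^{-1}(r)/MPM^{-1}(\pi)\bigr)\,MPM^{-1}$ with $M=L^{-1}$, I would expand
\[
w^{-1}(R_z) \;=\; \frac{\pi\,L^{-1}(f_0)}{Y(\pi)}\,Y \;+\; L^{-1}(g_0) \;+\; \frac{\pi\,L^{-1}(h_0)}{Y^{-1}(\pi)}\,Y^{-1}\;\in\;\mathcal{I}.
\]
Reading off $f$, $g$, $h$ from this, the factors $\pi$, $Y(\pi)$, $Y^{-1}(\pi)$ telescope completely in the ratio $gY(g)/(fY(h))$: using $YL^{-1}=L^{-1}Z$ and that $Y,Y^{-1}$ commute in $N$, one gets $gY(g)=L^{-1}(g_0)\cdot L^{-1}Z(g_0)=L^{-1}(g_0 Z(g_0))$ and $fY(h)=L^{-1}(f_0)\cdot L^{-1}Z(h_0)=L^{-1}(f_0 Z(h_0))$, so their quotient is exactly $L^{-1}\bigl(g_0 Z(g_0)/(f_0 Z(h_0))\bigr)$.

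To conclude, Proposition~\ref{prop3} guarantees that the element $fY+g+hY^{-1}\in\mathcal{I}$ of the lemma is unique up to multiplication by an element of $\mathcal{R}^*$, so it must be an $\mathcal{R}^*$-multiple of the explicit $w^{-1}(R_z)$ just computed; since $gY(g)/(fY(h))$ is visibly invariant under $(f,g,h)\mapsto(rf,rg,rh)$, the value obtained applies to the given $(f,g,h)$, and applying $L$ to both sides then yields \eqref{eqjan3}. The main obstacle I anticipate is the conjugation bookkeeping—identifying $w^{-1}$ as the correct transformation so that $MZM^{-1}=Y$ rather than $LZL^{-1}$, and verifying that every $\pi$-dependent prefactor cancels so that the final ratio depends only on the rational data of $R_z$ transported by $L^{-1}$.
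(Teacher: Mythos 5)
Your proof is correct and is essentially the paper's argument run in the opposite direction: the paper conjugates the unknown element $fY+g+hY^{-1}$ by $pL$ and matches the result against $R_z$, solving twice for $Z(p)/p$ and eliminating $p$, whereas you conjugate $R_z$ by $(pL)^{-1}$ and match against $fY+g+hY^{-1}$, with the prefactors involving $\pi$ cancelling directly. Both versions rest on exactly the same two ingredients --- Corollary \ref{cormain} and the uniqueness (up to left multiplication by $\mathcal{R}$) of the three-term relation supported on $\{Y,1,Y^{-1}\}$ from Proposition \ref{prop3} together with Corollary \ref{cor3} --- so this is the same proof in mirror image.
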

\begin{proof}
By Corollary \ref{cormain} we find that $pL(f Y + g  + h Y^{-1}) \in \mathcal{I}$. Explicitly we have
\begin{align*}
 pL(f Y + g  + h Y^{-1}) & = 
\frac{p}{LYL^{-1}(p)} L(f)  LYL^{-1} + L(g) + \frac{p}{LY^{-1}L^{-1}(p)} L(h) LY^{-1}L^{-1} \\
&= \frac{p}{Z(p)} L(f)  Z + L(g) + \frac{p}{Z^{-1}(p)} L(h) Z^{-1} 
\end{align*}
as $LYL^{-1}=L(L^{-1}ZL) L^{-1}=Z$. 
By Proposition \ref{prop3} and Corollary \ref{cor3} there exists, up to left-multiplication by rational functions, 
just one element in $\mathcal{I}$ of the form $\cdot Z + \cdot + \cdot Z^{-1}$, in particular 
the element $R_z$ from Proposition \ref{prop1}. 
Thus $pL(f Y + g  + h Y^{-1}) $ must be a multiple of $R_z$.
We obtain the equations
\[
\frac{pL(f)}{Z(p) (c-abz)} = \frac{L(g)}{(az+bz-c-q)} = \frac{pL(h)}{Z^{-1}(p) (q-z)}.
\]
The first equation implies
\begin{equation}\label{eqjan1}
 \frac{Z(p)}{p} = \frac{(az+bz-c-q)L(f)}{(c-abz)L(g)},
\end{equation}
while the second equation gives
\[
 \frac{p}{Z^{-1}(p)} = \frac{(q-z)L(g)}{(az+bz-c-q) L(h)}.
\]
After applying $Z$ on both sides of the latter equation this becomes
\begin{equation}\label{eqjan2}
 \frac{Z(p)}{p} = \frac{q(1-z)ZL(g)}{(aqz+bqz-c-q) ZL(h)}.
\end{equation}

Combining \eqref{eqjan1} and \eqref{eqjan2} gives
\[
\frac{(az+bz-c-q)L(f)}{(c-abz)L(g)} =
\frac{q(1-z)ZL(g)}{(aqz+bqz-c-q) ZL(h)},
\]
which can be reduced to \eqref{eqjan3} by using $ZL = LY$.
\end{proof}
The previous lemma shows that the coefficients $f$, $g$ and $h$ in the difference equation $fY+g+hY^{-1}$ are 
not very difficult. Indeed, applying a operator such as $L$ does not increase the number of terms of some polynomial, and
retains any factorization. We can use this to obtain restrictions on the possible $pL \in W$.
In particular we reduce the number of possible elements $L^{-1}ZL$.
\begin{lemma}\label{propfirst}
 Suppose $pL\in W$, and $L^{-1}ZL:=Y=M_{k_a,k_b,k_c,k_z}$, then $|k_a|,|k_b| \leq 1$.
\end{lemma}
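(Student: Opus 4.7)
The plan is to combine the factorization information from Proposition \ref{propwhatterms} with the identity \eqref{eqjan3} to force $|k_a|\leq 1$. I would first assume $k_a\geq 0$, the case $k_a<0$ being symmetric under exchange of $f$ and $h$. Applied to the three-term element $fY + g + hY^{-1}\in\mathcal{I}$ with $a$-ordering $k_{a,1}=k_a>k_{a,2}=0>k_{a,3}=-k_a$, Proposition \ref{propwhatterms} shows that $f$ is divisible by each of the $k_a$ distinct linear factors $(a-q^{-j})$ for $j=0,\ldots,k_a-1$, while $g$ contains none of the factors $(a-q^{-m})$ for $-k_a\leq m<k_a$. Since $Y(g)(q^{-j})=g(q^{k_a-j})$, the absence of these factors in $g$ also rules out $(a-q^{-j})$ as a factor of $Y(g)$ for $0\leq j<k_a$, and hence the $k_a$ linear factors of $f$ survive uncancelled in the reduced form of $gY(g)/fY(h)$.

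Applying $L$ to both sides of \eqref{eqjan3}, these $k_a$ surviving factors become the distinct binomials $L(a)-q^{-j}$, each of which must divide the denominator $q(c-abz)(1-z)$ on the right. The only non-monomial irreducible factors there are the binomials $(c-abz)$ and $(1-z)$, so counting irreducible factors in the Laurent polynomial ring (on which $L$ acts by ring automorphisms) immediately yields $k_a\leq 2$. To rule out $k_a=2$, I would observe that matching $L(a)-q^{-j_1}\sim (1-z)$ forces $L(a)\in\{q^{-j_1}z,\, q^{-j_1}z^{-1}\}$, while matching $L(a)-q^{-j_2}\sim (c-abz)$ forces $L(a)\in\{q^{-j_2}c/(abz),\, q^{-j_2}abz/c\}$; in every pairing, the exponents of $a$, $b$, or $c$ in the single monomial $L(a)$ are forced to take two distinct values, a contradiction. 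Hence $|k_a|\leq 1$, and the $a\leftrightarrow b$ symmetry of the right-hand side of \eqref{eqjan3} then gives the same bound $|k_b|\leq 1$.

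The main technical hurdle will be the non-cancellation step, which requires careful use of Proposition \ref{propwhatterms} to control both $g$ and the shifted version $Y(g)$; a smaller side point is that when the exponent vector of $L(a)q^j$ is non-primitive, the binomial $L(a)-q^{-j}$ itself splits into several irreducible factors in the Laurent polynomial ring, but each such factor still occupies one of the two available slots on the right, so the two-slot bound persists and the case analysis for $k_a=2$ still goes through.
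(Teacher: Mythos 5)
Your proposal is correct and follows essentially the same route as the paper: use Proposition \ref{propwhatterms} to show that $(a-q^{-j})$ for $0\leq j<k_a$ divides $f$ but neither $g$ nor $Y(g)$, conclude these factors survive in the denominator of $gY(g)/fY(h)$, and then compare with the explicit denominator $q(c-abz)(1-z)$ of the left-hand side of \eqref{eqjan3} after applying $L$. The only difference is that you make the final step explicit (enumerating the possible monomials $L(a)$ and deriving a contradiction), where the paper simply observes that the left-hand side "clearly" has no factor of the form $(x-1)(x-q^{-1})$ in its denominator.
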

\begin{proof}
Suppose $k_a >1$. Let $fY+g+hY^{-1} \in \mathcal{I}$ and assume $f$, $g$ and $h$ are non-zero polynomials with $gcd(f,g,h)=1$.

Using Proposition \ref{propwhatterms} we find that $a-q^{-j}~|~f$ for $0\leq j<k_a$, and $a-q^{-j} \not | f$ for $-k_a\leq j<0$. 
Moreover $a-q^{-j}\not | g$ for $-k_a\leq j<k_a$. 
This implies in particular that $a-1$ and  $a-q^{-1}$ both divide $f$, while
they do not divide $g$. Moreover note that $Y(a-q^{-j})=aq^{k_a} -q^{-j} = q^{k_a}(a-q^{-j-k_a})$. 
In particular we also find that $a-1$ and $a-q^{-1}$ also do not divide $Y(g)$, as 
$a-q^{k_a}$ and $a-q^{k_a-1}$  do not divide $g$. This implies that the 
expression $gY(g)/fY(h)$ contains the factors $(a-1)(a-q^{-1})$ in the denominator when expressed as $x/y$ with $gcd(x,y)=1$. 
After applying $L$ to it, we find that the denominator of $L(gY(g)/fY(h))$ should still
contain a factor of the form $(x-1)(x-q^{-1})$, for some monomial $x \in \mathbb{C}[a^{\pm 1},b^{\pm 1},c^{\pm 1},q^{\pm 1},z^{\pm 1}]^*$.
However the left hand side of \eqref{eqjan3} clearly does not contain such a factor in the denominator, which gives a contradiction.

The proof for $|k_b|\leq 1$ is completely similar.
\end{proof}

We can now use the known symmetries to further decrease the possible values of $L^{-1} ZL$. Indeed we have
\begin{lemma}
 Suppose $W=pL\in W$, and $L^{-1}ZL:=Y=M_{k_a,k_b,k_c,k_z}$, then $k_a$, $k_b$, $k_c$ and 
$k_z$ are one of the following combinations, or one of them with $k_a \to -k_a$ or with $k_a\leftrightarrow k_b$:
\begin{center}
\begin{tabular}{cccc|cccc|cccc|cccc|cccc}
 $k_a$ & $k_b$ & $k_c$ & $k_z$ & $k_a$ & $k_b$ & $k_c$ & $k_z$& $k_a$ & $k_b$ & $k_c$ & $k_z$& $k_a$ & $k_b$ & $k_c$ & $k_z$& $k_a$ & $k_b$ & $k_c$ & $k_z$\\
\hline
1 & 1 & 2 & 1 &         1 & 1 & 1 & -1 &      1 & 0 & 1 & -1 &      1 & -1 & 0 & 0 &     0 & 0 & 1 & 1 \\
1 & 1 & 2 & 0 &         1 & 1 & 0 & -1 &      1 & 0 & 0 & 0 &       1 & -1 & 0 & -1 &    0 & 0 & 0 & 1 \\
1 & 1 & 2 & -1 &        1 & 0 & 1 & 1 &       1 & 0 & 0 & -1 &      1 & -1 & 0 & -1 &    &&&\\
1 & 1 & 1 & 0  &        1 & 0 & 1 & 0 &       1 & -1 & 0 & 1 &      1 & -1 & -1 & 0 &    &&&
\end{tabular}
\end{center}

\end{lemma}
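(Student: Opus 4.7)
The plan is to extend the analysis from the proof of Lemma \ref{propfirst} by extracting further information from the master identity \eqref{eqjan3}, now combined with the finer factorization data provided by Proposition \ref{propwhatterms}. First I would reduce the case count by exploiting the known elements of $W$. Right-multiplying $pL$ by $s=p_s L_s\in W$ yields a new element of $W$ whose associated $Y$ is $L_s^{-1}YL_s$; that is, $Y$ is conjugated by $L_s^{-1}$. Taking $s=t_{ab}$ realises the $k_a\leftrightarrow k_b$ equivalence, while a suitable word in $t_h$ and $t_{ab}$ (whose matrix action on $N$ one computes directly from the explicit matrices in \eqref{eqelw}) realises the $k_a\mapsto -k_a$ equivalence. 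This allows us to assume, say, $0\le k_a\le 1$ and $|k_b|\le k_a$, leaving only a small handful of $(k_a,k_b)$-subcases to analyse.

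For each such subcase, the three-term relation $fY+g+hY^{-1}\in\mathcal{I}$ is unique up to rational scalar by Proposition \ref{prop3}. Proposition \ref{propwhatterms} applied to $a$ and $b$, together with its analogues in $c$ and $z$ (obtained either by mimicking its proof using Lemma \ref{lem1} in those variables, or by transferring via a $t_h$-symmetry), pins down the linear factors of $f$, $g$, $h$ and hence of $gY(g)/fY(h)$ as a function of $(k_a,k_b,k_c,k_z)$. Equation \eqref{eqjan3} then requires $L$ to send this specific rational function to the fixed target $(az+bz-c-q)(aqz+bqz-c-q)/[(c-abz)q(1-z)]$. Since $L$ sends monomials to monomials, each irreducible factor on the left must be the $L$-image of an irreducible factor of $gY(g)/fY(h)$ with the same number of monomial terms. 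Matching these factors off, with attention to the rigid shape of the LHS (two four-monomial numerator factors; a two-monomial, a one-monomial and a two-monomial denominator factor), produces a finite linear system on the entries of $L_s$, which in particular forces $(k_c,k_z)$ into a finite list that, once enumerated, reduces to the tabulated tuples.

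The main obstacle is the sheer bookkeeping of this case-bash: for each reduced $(k_a,k_b)$ one must extract enough of the coefficient factorization of the three-term relation to enumerate admissible $(k_c,k_z)$, and then cross-check each surviving tuple against the table. A subtler technical point is the extension of Proposition \ref{propwhatterms} to the variables $c$ and $z$: its proof relies on ${}_2\phi_1$ terminating when a top parameter equals $q^{-j}$, which does not translate directly to substitutions in $c$ or $z$, and so will require a short separate argument, probably cleanest by invoking an element of $W$ (such as $t_h$) that permutes $a$ or $b$ with $c$ or $z$ and thereby reduces the question to the already-proved variant.
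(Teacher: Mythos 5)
Your proposal contains the right mechanism (right-multiplying $pL$ by $s=p_sL_s\in W$ replaces $Y$ by $L_s^{-1}YL_s$), but it deploys it in the wrong place and then leans on an unsubstantiated case-bash. Two concrete problems. First, the normalization step is false as stated: the conjugation action of $\langle L_h,L_{ab}\rangle$ on $N\cong\mathbb{Z}^4$ contains no element realizing $k_a\mapsto -k_a$ (nor the global negation $Y\mapsto Y^{-1}$). Indeed the orbit of $A=M_{1,0,0,0}$ under this order-$12$ group is $\{A,B,C,A^{-1}Z,B^{-1}Z,ABCZ^{-1}\}$, which does not contain $A^{-1}$; so no word in $t_h,t_{ab}$ flips the sign of $k_a$. (The ``$k_a\to-k_a$'' and ``$k_a\leftrightarrow k_b$'' reductions in the statement are symmetries of a system of linear inequalities, not consequences of the group action.) This is repairable by carrying more $(k_a,k_b)$ cases, but it is an error. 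Second, and more seriously, the core of your argument --- that for each $(k_a,k_b)$ the factorization of $gY(g)/fY(h)$ matched against the left side of \eqref{eqjan3} forces $(k_c,k_z)$ into exactly the tabulated list --- is asserted, not argued. It requires $c$- and $z$-analogues of Proposition \ref{propwhatterms} that you only gesture at, and even granting those you give no mechanism by which factor-matching produces the specific bounds that carve out these particular tuples. The table is not ad hoc: it is precisely the integer solution set of six explicit linear inequalities, and nothing in your plan explains why you would land on that set.

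The paper's proof is a few lines and sidesteps all of this: since $W$ is a group, $pL\cdot s\in W$ for every $s$ in the group generated by $t_h$ and $t_{ab}$, and the associated exponent vector is $(L_s^{-1})_{4\times 4}\,(k_a,k_b,k_c,k_z)^T$, an explicitly computable linear image of $k$. Applying the already-established Lemma \ref{propfirst} to each such product bounds the first two components of each of these images by $1$ in absolute value; running over the Heine group this yields exactly $|k_a|\le 1$, $|k_b|\le 1$, $|k_z|\le 1$, $|k_b-k_c|\le 1$, $|k_a-k_c|\le 1$ and $|k_a+k_b-k_c+k_z|\le 1$, and the table is the enumeration of the integer solutions (modulo the evident symmetries of this system, and excluding $k=0$ since $Y\neq 1$). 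In other words, the conjugation trick you reserve for normalization is precisely what supplies the missing constraints on $k_c$ and $k_z$, with no further appeal to \eqref{eqjan3} and no need for new variants of Proposition \ref{propwhatterms}. I would restructure your argument along these lines.
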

\begin{proof}
We know that Heine's transformation $t_h\in W$ and the 
$a\leftrightarrow b$ interchanging symmetry $t_{ab}\in W$.
Hence we find that $t_hW, t_{ab}t_h W, \ldots, T_{h}t_{ab}t_ht_{ab}t_h W \in W$.
For all of these operators, we can now apply Proposition \ref{propfirst}, to obtain 
inequalities for $k_a$, $k_b$, $k_c$ and $k_z$ given by 
\begin{multline*}
 |k_a|\leq 1, \quad |k_b|\leq 1, \quad |k_z|\leq 1, \quad |k_b-k_c|\leq 1, \quad  |k_a-k_c|\leq 1, \quad |k_a+k_b-k_c+k_z|\leq 1. 
\end{multline*}
The list now gives all solutions to these inequalities, where we ommitted the cases $k_a<0$ and $k_a<k_b$ for symmetry 
reasons. Note that we omitted $(0,0,0,0)$ as $L^{-1}ZL \neq 1$.
\end{proof}

Since this is a finite set we can exactly calculate the expressions $gY(g)/fY(h)$ for all corresponding $Y$, and see
which are such that the relation \eqref{eqjan3} can hold (using again that the factorization of such terms is
not effected by applying the operator $L$). This leads to the following lemma.
\begin{lemma}
 Suppose $pL\in W$ and let $Y=LZL^{-1}$, then $Y=Z$ or $Y=AC$ or $Y=BC$.
\end{lemma}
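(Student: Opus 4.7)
The plan is to eliminate, one by one, every candidate in the finite table of the previous lemma except the three claimed ones. For each $Y = M_{k_a,k_b,k_c,k_z}$ in the list, Proposition \ref{prop3} together with Corollary \ref{cor3} gives a unique (up to left multiplication by $\mathcal{R}^*$) element $fY + g + hY^{-1} \in \mathcal{I}$ which we normalize so that $f, g, h$ are polynomials with $\gcd(f,g,h) = 1$. Equation \eqref{eqjan3} then forces
\[
L\!\left( \frac{gY(g)}{fY(h)}\right) = \frac{(az+bz-c-q)(aqz+bqz-c-q)}{(c-abz)q(1-z)} =: F.
\]
So for each $Y$ I would compute $gY(g)/(fY(h))$ explicitly and ask whether some $L \in G$ can carry it to $F$.

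The crucial invariant is the \emph{monomial signature} of a rational function. Since $G$ acts on $\mathbb{C}[a^{\pm 1},b^{\pm 1},c^{\pm 1},q^{\pm 1},z^{\pm 1}]$ by sending monomials to monomials, it preserves the number of monomial summands in every polynomial and sends irreducible factors to irreducible factors (modulo units). Hence, writing $gY(g)/(fY(h))$ in lowest terms as $P/Q$, the multisets of term-counts of the irreducible factors of $P$ and of $Q$ must match those of $F$. The right-hand side $F$, in lowest terms, has numerator consisting of two irreducible polynomials of four monomials each, and denominator consisting of two irreducible polynomials of two monomials each (the constant $q$ being absorbed into $\mathcal{R}^*$). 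Any candidate $Y$ whose associated three-term relation produces a $gY(g)/(fY(h))$ of different signature can be immediately discarded.

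For $Y = Z$ the three-term relation is $R_z$ itself, and $gZ(g)/(fZ(h)) = F$, so $L$ can be the identity. For $Y = AC$ and $Y = BC$ the three-term relation is obtained by conjugating $R_z$ by Heine's transformation $t_h$ (and, in the $BC$ case, also by $t_{ab}$) as in the derivation in Proposition \ref{prop1}; in both cases the resulting ratio carries the required $\{4,4\}$--$\{2,2\}$ signature. For each remaining $Y$ in the table one runs the algorithm implicit in the proof of Proposition \ref{prop3}: take an appropriate $\mathcal{R}$-linear combination of $P_a, P_b, P_c, Q_a, Q_b, Q_c, R_z$ and their shifts by elements of $N$ to isolate the three-term element $fY + g + hY^{-1}$. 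Using also Proposition \ref{propwhatterms} (which pins down the linear factors of $f$, $g$, $h$ in each of the variables $a$ and $b$) one sees that the resulting numerator or denominator acquires either an irreducible factor with the wrong number of monomials, or the wrong count of factors of a given length, so that no element of $G$ can map it to $F$.

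The only real obstacle is the bookkeeping of this case-by-case elimination through the roughly twenty entries of the table; no single case is delicate, but there is no shortcut around performing the explicit factorizations. After all the discards the only surviving candidates are $Y = Z$, $Y = AC$, and $Y = BC$, which is the claim.
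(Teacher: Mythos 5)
Your proposal is correct and follows essentially the same route as the paper: the paper likewise computes $gY(g)/fY(h)$ for every $Y$ in the finite table and discards those whose denominator (in lowest terms) cannot be written as a product of two binomials, an invariant preserved by $G$ since it maps monomials to monomials; the paper performs these tedious case checks by computer, exactly the bookkeeping you defer. Your "monomial signature" is a mild refinement (tracking the numerator as well), but the underlying argument and its reliance on explicit computation are the same.
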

\begin{proof}
 We calculate $gY(g)/fY(h)$ for $Y=A^{k_a}B^{k_b}C^{k_c}Z^{k_z}$ for all possibilities of the last
proposition. Then we throw out all cases in which the denominator (after writing it as $x/y$ for some
polynomials $x$ and $y$ with $gcd(x,y)=1$) can not be written as the product of two terms which are both the sum of 
two monomials. The calculations are extremely tedious and were therefore performed by computer.
\end{proof}

We need to further disallow a few possible transformations. Indeed so far all the conditions we set were 
derived from the fact that if $t\in W$, then $t({}_2\phi_1)$ satisfies the same $q$-difference equations
as ${}_2\phi_1$. However as these are second order $q$-difference equations, there exists a second
independent solution. This solution can also be expressed in terms of ${}_2\phi_1$, so we must give another
argument why it is independent of ${}_2\phi_1$.
\begin{lemma}\label{lemimp}
There exist no element $pL\in W$, where 
\[
 L=\left( \begin{array}{ccccc}
           1 & 0 & -1 & 0 & 1 \\
           0 & 1 & -1 & 0 & 1 \\
           0 & 0 & -1 & 0 & 2 \\
           0 & 0 &  0 & 1 & 0 \\ 
           0 & 0 & 0 &  0 & 1
          \end{array} \right)
\]
and $p\in H$ is arbitrary.
\end{lemma}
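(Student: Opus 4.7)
Suppose for contradiction that $pL\in W$ for some $p\in H$. Setting $g := L({}_2\phi_1) = \mfi{aq/c}{bq/c}{q^2/c}{z}$, the hypothesis becomes the identity $pg = \mfi{a}{b}{c}{z}$ in $\mathcal{M}$. My plan is to use the second-order $z$-difference equation $R_z$ to extract a first-order $q$-difference equation in $z$ for $p$ alone, and then to show no nonzero meromorphic $p$ can satisfy it.

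A direct matrix check gives $L^{-1}ZL=Z$ (this $L$ does not touch row or column $4$), so the operator $L(R_z):=LR_zL^{-1}$ has support $\{Z^{-1},1,Z\}$ and annihilates $g$, with coefficients obtained by applying $L$ to those of $R_z$. Expanding $R_z(pg)=0$ by the product rule in $\mathcal{D}$ and then using $L(R_z)g=0$ to eliminate the $g(z/q)$ term yields
\[
(c-abz)\left[p(qz)-\frac{q^2}{c^2}p(z/q)\right]g(qz)+(c+q-az-bz)\left[\frac{q}{c}p(z/q)-p(z)\right]g(z)=0.
\]

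The main obstacle is to conclude that $g(qz)$ and $g(z)$ are linearly independent over $\mathcal{R}$, forcing each bracket to vanish. Suppose not; then $(Z-r)g=0$ for some $r\in\mathcal{R}$, and since $L^{-1}ZL=Z$, conjugation by $L^{-1}$ sends this to $Z-L^{-1}(r)\in\mathcal{I}$, a nonzero length-$2$ element, contradicting Lemma \ref{prop2}. With both brackets vanishing, the two identities collapse to $p(qz)=(q/c)p(z)$. To finish, Laurent-expand $p=\sum_n p_n(a,b,c,q)z^n$ about $z=0$; the relation forces $p_n(q^n-q/c)=0$ for every $n$, and for generic $c$ no integer $n$ satisfies $q^n=q/c$, so each $p_n$ vanishes and $p\equiv 0$, contradicting $p\in H$.
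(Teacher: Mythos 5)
Your reduction is correct as far as it goes: $L^{-1}ZL=Z$, the elimination of the $g(z/q)$ term via $L(R_z)g=0$ is right, the linear independence of $g$ and $Z(g)$ over $\mathcal{R}$ follows from Lemma \ref{prop2} exactly as you say, and the resulting constraint $Z(p)=(q/c)\,p$ agrees with the relation $p/Z(p)=c/q$ that the paper itself derives. The fatal gap is the final step. The functional equation $p(qz)=(q/c)\,p(z)$ does \emph{not} force $p\equiv 0$: your Laurent-expansion argument presupposes that $p$ is meromorphic at $z=0$ with at worst a pole there, but elements of $H$ need only be meromorphic away from $z=0$ and may have zeros and poles accumulating at the origin. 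Concretely, $p=\theta(cz/q;q)/\theta(z;q)$ (with $\theta(x;q)=(x,q/x;q)_\infty$, as in the paper) lies in $H$ --- all four quotients $A(p)/p$, $B(p)/p$, $C(p)/p$, $Z(p)/p$ are rational, using $\theta(qx;q)=-x^{-1}\theta(x;q)$ --- it satisfies $Z(p)=(q/c)\,p$, and it is manifestly nonzero. So your argument stops one step short of any contradiction.

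Moreover, the gap cannot be repaired by extracting more constraints from the contiguous relations alone. As the paper remarks immediately after its proof of this lemma, there genuinely exists a $q$-hypergeometric term $p$ (an explicit product of $q$-shifted factorials and theta quotients, denoted $g$ there) for which $p\,{}_2\phi_1(aq/c,bq/c;q^2/c;q,z)$ lies in the kernel of \emph{every} element of $\mathcal{I}$: it is the second solution of the second-order $q$-difference equations. Hence any proof that only imposes membership of $p\,L({}_2\phi_1)$ in the kernels of elements of $\mathcal{I}$ must fail. The paper's proof accordingly does two things you do not: it pins $p$ down completely, up to a multiplicative factor $h$ that is elliptic (invariant under $A$, $B$, $C$ and $Z$), by computing $p/A(p)$, $p/B(p)$, $p/C(p)$, $p/Z(p)$ from $P_a$, $P_b$, $Q_c$ and a relation involving $ABC$; and it then derives a contradiction from actual values of the series, namely multiplying the putative identity by $(1-q/a)$ and setting $a=q$ forces $h|_{a=q}=0$, while setting $a=1$ forces $h|_{a=1}\neq 0$, contradicting the $q$-periodicity of $h$ in $a$. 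Some evaluation-type input of this kind is unavoidable, and it is precisely what your proposal is missing.
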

\begin{proof}
Suppose such an equation does exist.
As $P_a\in \mathcal{I}$ we find by Corollary \ref{cormain} that 
$pL(P_a) \in \mathcal{I}$. An explicit calculation now gives (observe that $L^2=1$, so $L=L^{-1}$)
\[
pL(P_a)  =
pL( (1-a) A)L^{-1}p^{-1} - 1 + pL(aZ)L^{-1} p^{-1}
= \frac{p}{A(p)} (1-aq/c)  A -1 + \frac{p}{ Z(p)} \frac{aq}{c} Z
\]
This element should be a multiple of $P_a$ (due to Proposition \ref{prop3}), and as the constant term is identical we find
\[
\frac{p}{A(p)} = \frac{1-a}{1-aq/c}, \qquad \frac{p}{Z(p)}= \frac{c}{q}.
\]
Similarly using $P_b$ instead of $P_a$ we find $p/B(p)=(1-b)/(1-bq/c)$, and using $Q_c$ we find
\begin{equation}\label{eqmult1}
 pL(Q_c) = 
-q+pL(cZ)L^{-1}p^{-1} + pL(q-c)C^{-1}L^{-1}p^{-1} 
 = -q + \frac{p}{Z(p)} \frac{q^2}{c}Z + \frac{p}{ABC(p)} (q-\frac{q^2}{c}) ABC
\end{equation}
Now from 
\begin{align*}
 {}_2\phi_1(aq,bq;cq;q,z) & = 
\sum_{k\geq 0} \frac{(aq,bq;q)_k}{(q,cq;q)_k} z^k \\ &= 
\frac{(1-c)}{z(1-a)(1-b)} \sum_{k\geq 0} \frac{(a,b;q)_{k+1}}{(q,c;q)_{k+1}} (1-q^{k+1}) z^{k+1} \\ & = 
\frac{(1-c)}{z(1-a)(1-b)} \left( {}_2\phi_1(a,b;c;q,z) - {}_2\phi_1(a,b;c;q,qz) \right),
\end{align*}
where the last equality holds as the term within the sum vanishes for $k=-1$, we find that
\[
 ABC + \frac{1-c}{z(1-a)(1-b)} (Z-1) \in \mathcal{I}.
\]
Comparing this with \eqref{eqmult1} we find the equality
\[
\frac{p}{ABC(p)}= \frac{qz(1-b)(1-a}{(q-q^2/c)(1-c)}
\]
Now we can find 
\begin{multline*}
 \frac{p}{C(p)}= \frac{p}{ABC(p)} BC(\frac{A(p)}{p}) C(\frac{B(p)}{p}) 
= \frac{z(1-b)(1-a}{(1-q/c)(1-c)} \frac{1-a/c}{1-a} \frac{1-b/c}{1-b} \\ =
\frac{z(1-a/c)(1-b/c)}{(1-q/c)(1-c)} = \frac{z(c-a)(c-b)}{c(c-q)(1-c)}.
\end{multline*}
As the function
\[
 g(a,b;c;q,z) = \frac{(c/a,c/b,q^2/c;q)_\infty}{(c,q/a,q/b;q)_\infty} \frac{\theta(ab,z;q)}{\theta(c/ab,c/z;q)},
\]
where $\theta(x;q)=(x,q/x;q)_\infty$, 
satisfies the equations $Ag/g=\frac{(c-aq)}{c(1-a)}$, $Bg/g=\frac{(c-bq)}{c(1-b)}$, $Cg/g=\frac{c(c-q)(1-c)}{z(c-a)(c-b)}$
and $Zg/g=q/c$ we find that $p=g h$ for some elliptic function $h$ (i.e. $Ah=h$, $Bh=h$, $Ch=h$, and $Zh=h$).
Thus a desired equation is of the form
\begin{equation}\label{eqfeb1}
 {}_2\phi_1(a,b;c;q,z) = \frac{(c/a,c/b,q^2/c;q)_\infty}{(c,q/a,q/b;q)_\infty} \frac{\theta(ab,z;q)}{\theta(c/ab,c/z;q)}
h(a,b,c,q,z) {}_2\phi_1(aq/c,bq/c;q^2/c;q,z).
\end{equation}
Now multiplying this equation by $(1-q/a)$ and 
subsequently inserting $a=q$ we find
\begin{multline*}
 0=(1-q/q) {}_2\phi_1(q,b;c;q,z) \\ = \frac{(c/q,c/b,q^2/c;q)_\infty}{(c,q,q/b;q)_\infty} \frac{\theta(bq,z;q)}{\theta(c/bq,c/z;q)}
h(q,b,c,q,z) {}_2\phi_1(q^2/c,bq/c;q^2/c;q,z)
\end{multline*}
therefore $h(q,b,c,q,z)=0$ (the other terms on the right hand side do not vanish). 
On the other hand inserting $a=1$ in \eqref{eqfeb1} we find
\[
1= {}_2\phi_1(1,b;c;q,z) = \frac{(c,c/b,q^2/c;q)_\infty}{(c,q,q/b;q)_\infty} \frac{\theta(b,z;q)}{\theta(c/b,c/z;q)}
h(1,b,c,q,z) {}_2\phi_1(q/c,bq/c;q^2/c;q,z),
\]
thus $h(1,b,c,q,z)\neq 0$. However since $h$ is elliptic in $a$ we find $0=h(q,b,c,q,z)=h(1,b,c,q,z)\neq 0$, which is a contradiction.
Therefore no equation of the desired form exists.
\end{proof}

Note that the right hand side of \eqref{eqfeb1} indeed satisfies the same $q$-difference equations as
${}_2\phi_1$ itself. (This can be shown by proving it is in the kernel of $P_a$, $P_b$, etc.)

Now we only have to check the possible $L\in G$ which can occur for $Y=Z$, $Y=AC$ and $Y=BC$.
If $Y=L^{-1}ZL=BC$, we can consider $L'=LL_h$ 
(where $L_h\in G$ occurs in Heine's transformation, from \eqref{eqelw}) and find that $Y'=L'^{-1} ZL'=L_h^{-1} Y L_h = 
L_h^{-1} BC L_h = Z$. So without loss of generality we may assume $Y\neq BC$, and similarly $Y\neq AC$.
So let us assume $Y=Z$.

Note that for the case $Y=Z$ we have 
$gY(g)/fY(h)$ equal to the left hand side of \eqref{eqjan3}, so we must find $L\in G$ such that
\begin{equation}\label{eqnow}
\frac{(az+bz-c-q)(aqz+bqz-c-q)}{(c-abz)q(1-z)} = 
L \left(\frac{(az+bz-c-q)(aqz+bqz-c-q)}{(c-abz)q(1-z)} \right).
\end{equation}
This implies that we have to look for operators $L$ which preserve the left hand side of the above equation.
Moreover we know that $LZL^{-1}=Z$, so we have $ZL(z)= LZ(z)=L(qz)=qL(z)$. Hence we find
$L(z)=Kz$ for some monomial $K \in \mathbb{Z}[a^{\pm 1},b^{\pm 1},c^{\pm 1},q^{\pm 1}]^*$. Similarly we can prove that 
$L(a), L(b), L(c)$ are all monomials in $\mathbb{Z}[a^{\pm 1},b^{\pm 1},c^{\pm 1},q^{\pm 1}]^*$. 

Now rewrite the left hand side of \eqref{eqnow} as
\[
 \frac{ ( \frac{a+b}{c+q}z-1)(\frac{a+b}{c+q}qz-1)}{(\frac{ab}{c}z-1)(z-1)} \cdot \frac{(q+c)^2}{cq},
\]
i.e. as the unique expression of the form $K (A_1z-1)(A_2z-1)/(B_1z-1)(B_2z-1)$, with coefficients $K,A_1,A_2,B_1,B_2 \in \mathbb{C}(a,b,c,q)$.
As $L$ preserves the constant 1, we find that $L$ has to either interchange the two 
factors in the numerator and denominator, or
leave them invariant. For the numerator we have only one choice as the term which is $q$ times the other term, remains
such after application of $L$. So we have $L((a+b)z/(c+q))=(a+b)z/(c+q)$. Moreover the remaining constant 
has to be preserved, so $L( (q+c)^2/qc) = (q+c)^2/qc$. In the denominator we find that either $L(z)=z$ 
(and thus $L(abz/c)=abz/c$) or $L(z) = abz/c$ (and $L(abz/c)=z$). If $L(z)=abz/c$ we can consider 
$L'=(L_{ab}L_h)^3 L$ instead of $L$, which then satisfies $L'(z)=z$ (and $L'(abz/c)=abz/c$). Thus without loss
of generality we assume $L(z)=z$.
From the equation $L( (q+c)^2/qc) = (q+c)^2/qc$, we find $L(c)/q+q/L(c) = c/q+q/c$, which means that either
$L(c) = c$ or $L(c)=q^2/c$. 

In the case $L(c)=q^2/c$ we find
\[L(a+b) =L( \frac{(a+b)z}{(c+q)}) L( \frac{c+q}{z}) = \frac{(a+b)(q^2/c+q)}{c+q} =\frac{(a+b)q}{c}\] and
$L(ab) = L(abz/c) L(c/z) = abq^2/c^2$. Thus we find the polynomial identity $(x-L(a))(x-L(b))=x^2-L(a+b)x+L(ab)=(x-aq/c)(x-bq/c)$.
From this we conclude that either $L(a)=aq/c$ and $L(b)=bq/c$ or vice versa. 
However Lemma \ref{lemimp} shows that there exist no elements in $W$ with this $L$. Thus we can assume from now on that $L(c)=c$. 

Now as $L(z)=z$ we find that $L(a+b) = L( (a+b)z/(c+q)) L( (c+q)/z) = (a+b)$, and 
$L(ab) = L(abz/c) L(c/z) = ab$. Thus we find that $(x-L(a))(x-L(b))=(x-a)(x-b)$. Therefore either
$L(a)=a$ and $L(b)=b$ or $L(a)=b$ and $L(b)=a$. In the first case we end up with the identity, and in the second case with the 
$a\leftrightarrow b$ shifting symmetry $t_{ab}$.
Note that each $L$ can lead to at most one transformation, as two different transformations corresponding to the 
same $L$ would only differ in a $q$-hypergeometric multiplicative term, which then clearly has to equal 1.

Thus there are no other elements in $W$ than those in the group generated by $t_h$ and $t_{ab}$.
\end{proof}

\end{document}